\theoremstyle{definition}
\newtheorem{definition}{Definition}[section]
\newtheorem{proposition}[definition]{Proposition}
\newtheorem{lemma}[definition]{Lemma}
\newtheorem{theorem}[definition]{Theorem}
\newtheorem{corollary}[definition]{Corollary}
\newtheorem{remark}[definition]{Remark}
\newtheorem{criteria}[definition]{Criteria}
\newcommand{\bbA}{\mathbb{A}}
\newcommand{\bbG}{\mathbb{G}}
\newcommand{\bbP}{\mathbb{P}}
\newcommand{\bbN}{\mathbb{N}}
\newcommand{\bbZ}{\mathbb{Z}}
\newcommand{\bbR}{\mathbb{R}}
\newcommand{\bbT}{\mathbb{T}}
\newcommand{\bbL}{\mathbb{L}}
\newcommand{\bbI}{\mathbb{I}}
\newcommand{\bbS}{\mathbb{S}}
\newcommand{\bbU}{\mathbb{U}}
\newcommand{\KC}{\mathcal{C}}
\newcommand{\KG}{\mathcal{G}}
\newcommand{\SL}{\mathscr{L}}
\newcommand{\SX}{\mathscr{X}}
\newcommand{\SQ}{\mathscr{Q}}
\newcommand{\ft}{\mathfrak{t}}
\newcommand{\fS}{\mathfrak{S}}
\newcommand{\bfH}{\mathbf{H}}
\newcommand{\bfI}{\mathbf{I}}
\newcommand{\bfR}{\mathbf{R}}
\newcommand{\bfX}{\mathbf{X}}
\newcommand{\bfD}{\mathbf{D}}
\newcommand{\bfCh}{\mathbf{Ch}}
\newcommand{\bfDelta}{\mathbf{\Delta}}
\newcommand{\bfGamma}{\mathbf{\Gamma}}
\newcommand{\bfp}{\mathbf{p}}
\newcommand{\bff}{\mathbf{f}}
\newcommand{\bfu}{\mathbf{u}}
\newcommand{\bfe}{\mathbf{e}}
\DeclareMathOperator{\spec}{Spec}
\DeclareMathOperator{\an}{\mathrm{an}}
\DeclareMathOperator{\dd}{d^\prime d^{\prime\prime}}
\DeclareMathOperator{\MA}{\mathrm{MA}}
\DeclareMathOperator{\trop}{\mathrm{trop}}
\DeclareMathOperator{\Div}{\mathrm{div}}
\DeclareMathOperator{\Id}{\mathrm{Id}}
\DeclareMathOperator{\Pic}{\mathrm{Pic}}
\DeclareMathOperator{\Ch}{\mathrm{Ch}}
\DeclareMathOperator{\Sh}{\Check{S}}
\DeclareMathOperator{\dom}{\mathrm{dom}}
\DeclareMathOperator{\wt}{\mathrm{wt}}
\newcommand{\ndot}{\mathord{\cdot}}
\DeclarePairedDelimiter{\norm}{\lVert}{\rVert}
\DeclarePairedDelimiter{\abs}{\lvert}{\rvert}
\DeclarePairedDelimiter{\Card}{[\![}{]\!]}
\DeclarePairedDelimiter{\braket}{\langle}{\rangle}
\begin{document}
\title{Critical Fubini-Study metrics over non-archimedean field}
\author{Yanbo Fang}
\begin{abstract}
    Over a non-archimedean local place, the height of a projective variety with respect to a very ample line bundle equipped with a Fubini-Study metric is related to the naive height of its Chow form. Using a non-Archimedean Kempf-Ness criteria, we characterize Fubini-Study metrics that minimize the height under the special linear action in terms of their Monge-Ampère polytopes. This polytope can be constructed either as its non-Archimedean Bergman functional or as the weight polytope for the residual action on its Chow form; it is associated with a polymatroid.
\end{abstract}
\maketitle

\section{Introduction}

\subsection{Height of projective varieties over global or local fields}~

Analogous to the construction of heights for projective varieties\footnote{It is also called Faltings' height (for cycles), and we call it \emph{projective height} to avoid confusion with the canonical height of Abelian varieties; note that in \cite{BoGS} this terminology refers to another intersection $[\SX]\cdot c_{d+1}(\SQ)$ which we prefer to call \emph{dual projective height} in future work in accordance with the duality picture given in \textit{idbd.}.} over a global field, in Arakelov geometry over a non-archimedean local place, the height of a projective variety $X$ of pure dimension $d$ with very ample line bundle $L$ over a non-archimedean field $K$ was defined as a degree-like intersection number $[\SX]\cdot c_1(\SL)^{d+1}$ over a model $(\SX,\SL)$ of $(X,L)$ over the valuation ring; it is shown to be equal to a local Weil height of its Chow form $R_{\SX}$ \cite{Gub}, and can also be expressed via the intersection line bundle $\braket{\SL}^{\braket{d+1}}$ \cite{BE}. These constructions can be recasted in differentio-geometric terms in parallel with the situation over an Archimedean place where the local height is given by Green pairings of differential forms, thanks to the fact that an ample integral model can be equivalently described by its induced (non-Archimedean) Fubini-Study metric on the (Berkovich) analytification $L^{\an}$ over $X^{\an}$. Unlike the global case, besides the metric, the projective height over a local place also depends on the choice of $(d+1)$ global sections of $\SL$; this dependence disappear when local projective heights add up to the global one, thanks to the product formula (see \cite{Gub}\cite{CM}). 

Geometric invariant theory can be applied to Arakelov geometry over number fields (see for example \cite{Bur}\cite{Zha2}\cite{Gas}\cite{Che}\cite{Mac}). For a subvariety $X$ of $\bbP^n$, the natural action by $\bbS\bbL(n+1)$ moves the cycle and changes its projective height. The minima of projective heights of cycles in such an orbit were studied; the existence requires semi-stability of its Chow form, and lower bounds were obtained based on estimates of successive minima for arithmetic varieties \cite{Sou}\cite{Bos1}\cite{Zha2}. Thanks to the Kempf-Ness criteria over complex numbers and its non-Archimedean version, minima of local projective heights were known to be attained at cycles in the orbit whose Chow form has semi-stable reduction. Equivalently, one can fix the subvariety and look at the orbit of Fubini-Study metrics at each place under the action, and for a chosen place, call the Fubini-Study metrics on $L^{\an}$ that minimizes the corresponding local projective height under the $\bbS\bbL(n+1)$-action \emph{critical (F.-S) metrics}\cite{Zha1}.

\subsection{Critical Fubini-Study metrics over a local place}~

Over an Archimedean place, by directly calculating the Euler-Lagrange equation for the variation of heights, critical Fubini-Study metrics\footnote{Indeed over an Archimedean place, criticality can be defined among all semipositive continuous metrics, and critical ones are shown to be Fubini-Study ones; we don't know if this holds over a non-Archimedean place.} on $L^{\an}$ are characterized by the constancy of its Bergman distorsion function on $X^{\an}$\cite{Zha1}\cite{Bos2}. Such critical Fubini-Study metrics and their characterization in terms of Bergman functions are rediscovered as \emph{balanced metrics} in Kähler geometry and are exploited as approximations to a putative metric on $L^{\an}$ whose curvature form is positive which gives a constant scalar curvature Riemannian metric on $X^{\an}$ \cite{Luo}\cite{Don}. 

This article proposes a charcterization of critical Fubini-Study metrics in the non-Archimedean case. We give two approaches, both rely on the non-Archiemdean Kempf-Ness theorem and differentio-geometric techniques on Berkovich analytic spaces. The first follows the strategy of the Archimedean case by directly calculating the variation of projective height, using its equivalent expression as a metric on the intersection line bundle and a corresponding variational formula \cite{BE}. Another approach exploits the non-Archimedean Kempf-Ness criteria \cite{Bur}\cite{Mac} and focuses on the semi-stability of the reduction of Chow form, by expressing its weight polytope under the residual action using tropical intersection implemented by the calculus of $\delta$-forms \cite{CLD}\cite{GK}\cite{Mih}. 

Both leads to the same criteria, in terms of an object constructed purely from $(X,L;\phi_{\norm{\ndot}})$: upon choosing an orthonormal basis in $H^0(\bbP^n,O(1);\norm{\ndot})$, the analogue of Bergman distorsion function is a polytope in $\bbR^{n+1}/\pmb{1}\bbR$, denoted by $P_{\MA}(X,L;\phi_{\norm{\ndot}})$ as a finite Minkowski sum of simplices $\sum \mu_a\triangle_{I(a)}$ indexed by the support and dilated by the multiplicity, of the non-Archimedean Monge-Ampère measure $c_1(L,\phi_{\norm{\ndot}})^{\wedge d}$. The characterization for critical Fubini-Study metric is
\begin{theorem}(\ref{T: critical metric}, \ref{T': critical metric})
Assume that $(X,L)$ is Chow stable. A Fubini-Study metric associated with a strict Cartesian norm $\norm{\ndot}$ is critical if and only if $\pmb{0}\in P_{\MA}(X,L;\phi_{\norm{\ndot}})$. 
\end{theorem}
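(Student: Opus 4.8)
The plan is to recast the problem as a convex variational problem on the non-Archimedean symmetric space of norms and to identify the relevant subdifferential with the Monge--Amp\`ere polytope. Fix an orthonormal basis of $H^0(\bbP^n,O(1);\norm{\ndot})$ adapted to the strict Cartesian norm $\norm{\ndot}$; after using the maximal compact part of $\bbS\bbL(n+1)$ to pin down this frame, the action on Fubini--Study metrics is generated along geodesics by Cartan directions, i.e.\ by weight vectors $u=(u_0,\dots,u_n)$ with $\sum_i u_i=0$. These traceless vectors form the dual space of $\bbR^{n+1}/\pmb 1\bbR$. Writing $h(\phi)$ for the local projective height of $\phi=\phi_{\norm{\ndot}}$, along each one-parameter subgroup $\lambda_u$ the function $t\mapsto h(\lambda_u(t)\cdot\phi)$ is convex and piecewise affine, so $\phi$ is \emph{critical} if and only if every one-sided directional derivative is nonnegative, equivalently $\pmb 0$ lies in the subdifferential of $h$ at $\phi$. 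The whole argument reduces to computing this subdifferential.

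For the first variation I would use the expression of $h(\phi)$ as a metric on the intersection line bundle $\braket{\SL}^{\braket{d+1}}$ together with the variational formula of \cite{BE}. Deforming $\phi$ in the direction $u$ rescales the $i$-th homogeneous coordinate and shifts the induced valuation, so the variational formula expresses the first variation as an integral against the non-Archimedean Monge--Amp\`ere measure $\mu=c_1(L,\phi)^{\wedge d}$,
\begin{equation}
\left.\frac{d}{dt}\right|_{t=0^+} h(\lambda_u(t)\cdot\phi)=\int_{X^{\an}} g_u\, d\mu,
\end{equation}
where $g_u$ is the piecewise-linear function on $X^{\an}$ determined by $u$ through the tropicalization attached to $\phi$. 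At a point $a$ in the support of $\mu$ only the coordinates in the active index set $I(a)$ (those attaining the maximum in the non-Archimedean Fubini--Study potential) contribute, and by the envelope computation $g_u(a)=\max_{i\in I(a)} u_i$, which is the support function of the simplex $\triangle_{I(a)}$ evaluated at $u$. Since for a strict Cartesian norm $\mu$ is a finite atomic sum $\sum_a\mu_a\,\delta_a$ on the skeleton, the first variation equals $\sum_a\mu_a\max_{i\in I(a)}u_i$, exactly the support function of the Minkowski sum $P_{\MA}(X,L;\phi)=\sum_a\mu_a\triangle_{I(a)}$ at $u$.

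Combining these, the right directional derivative of $h$ in the direction $u$ equals $\max_{\beta\in P_{\MA}}\braket{u,\beta}$. Criticality thus means $\max_{\beta\in P_{\MA}}\braket{u,\beta}\ge 0$ for every traceless $u$; by the standard support-function criterion for membership — valid precisely because the traceless vectors are the dual of $\bbR^{n+1}/\pmb 1\bbR$ — this holds if and only if $\pmb 0\in P_{\MA}(X,L;\phi)$. The hypothesis that $(X,L)$ is Chow stable enters here to guarantee, via the non-Archimedean Kempf--Ness criteria of \cite{Bur}\cite{Mac}, that $h$ is proper and geodesically convex along every one-parameter subgroup, so that a minimizer exists in the orbit and the first-order condition $\pmb 0\in P_{\MA}$ is genuinely equivalent to global minimality rather than to a degenerate critical configuration. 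This is the variational route; the alternative replaces the first-variation computation by computing, through tropical intersection and $\delta$-forms \cite{CLD}\cite{GK}\cite{Mih}, the weight polytope of the residual $\bbS\bbL(n+1)$-action on the reduction of the Chow form $R_{\SX}$, and then invokes the identification of that weight polytope with $P_{\MA}$.

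I expect the main obstacle to be the second paragraph: showing rigorously that the first variation of the non-Archimedean height is governed by $\mu$ with exactly the Minkowski-sum-of-simplices structure. On the analytic side this needs the differential calculus of $\delta$-forms on Berkovich spaces and a careful local analysis showing that the tropical derivative $g_u$ collapses to $\max_{i\in I(a)}u_i$ on the active simplex at each atom of $\mu$; the delicate point is interchanging the variation with the non-smooth Monge--Amp\`ere operator and controlling the piecewise-linear loci where $I(a)$ jumps. On the Chow-form side the corresponding difficulty is to compute the residual weights as tropical intersection numbers and match them atom-by-atom with the masses $\mu_a$ and supports $I(a)$; reconciling the two computations is what pins the polytope down unambiguously and is the crux of the argument.
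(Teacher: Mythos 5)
Your proposal is correct and follows essentially the same route as the paper's own proof of Theorem \ref{T: critical metric}: variation of the intersection (Chow) metric via \cite{BE}, identification of the one-sided derivative with $\sum_{a\in\Sh}\mu_a\max_{i\in I(a)}\{-\lambda_i\}$ (the paper's Bergman distorsion, Lemma \ref{L: variation of FS metric}), and the support-function criterion for $\pmb{0}\in P_{\MA}$, while your sketched alternative is precisely the paper's second proof (Theorem \ref{T': critical metric}). One small correction: Chow stability is not what makes the first-order condition equivalent to minimality --- convexity (piecewise affinity) of $t\mapsto\log\norm{\gamma_t.R_X}_{\fS_{\delta,d+1}^{\vee}}$ along compatible one-parameter subgroups is automatic, and the passage from minimality along all such subgroups to global minimality over $\bbS\bbL(n+1)^{\an}$ is the content of Criteria \ref{C: KN criteria} (Cartan decomposition), which holds independently of stability.
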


\subsection{Organization of article} In this article, we denote by
\begin{itemize}
    \item $K$ a non-archimedean field with a non-trivial absolute value $\abs{\ndot}$, with the valuation ring $K^{\circ}$, the residue field $\widetilde{K}$, and  $\overline{K}$ the completion of its algebraic closure; through out this article it is assumed that $K=\overline{K}$ (except in Criteria \ref{C: KN criteria})
    \item $E$ a $K$-vector space of rank $(n+1)$ and $\norm{\ndot}$ a strict Cartesian ultrametric norm on $E$;
    \item $X$ a closed irreducible subvariety of $\bbP(E)$, of dimension $d$ and degree $\delta$;
    \item $L$ the tautological $O(1)$ sheaf of rank $1$, $\overline{L}$ with the tautological metrization by $\norm{\ndot}$: for $L$ it is the Fubini-Study metric $\phi:=\phi_{\norm{\ndot}}$; $\SL$, $\SX$ corresponding model objects over $K^{\circ}$ for the model structure induced by $\norm{\cdot}$.
    \item $\Card{n}$ the set $\{0,1,\dots, n\}$ for any $n\in\bbN$
\end{itemize}
In \S2 we first recall standard constructions over a non-Archimedean field about Chow forms and intersection line bundles as as well as metrics on them; then we show the criticality criteria using a standard calculation of the variation of Chow norms in terms of the variation of an intersection metric. In \S3 we first recall tropical intersection theory and their incarnation into calculus of $\delta$-forms, then translate the non-Archimedean Kempf-Ness criteria for Chow forms by convex analysis to the criticality criteria.

\section{Analytic aspects}
\subsection{Fubini-Study metric and Monge-Ampère measure}~

For a projective variety $X$ with a very ample line bundle $L$, denote by $E$ the $K$-vector space $H^0(X,L)$ of dimension $(n+1)$ and by $\check{E}$ its dual vector space. Embed $X$ into $\bbP(E)$, and let $\bbP(\check{E})$ be its dual projective space. Let $\underline{x}$ be a homogeneous coordinates and $\underline{\xi}$ be its dual homogeneous coordinates, and $H$ and $\check{H}$ be the cycle classes of hyperplanes. Let $\bbI$ be the incidence divisor on $\bbP(E)\times \bbP(\check{E})$.

Let $\norm{\ndot}$ be a strict $K$-Cartesian ultrametric norm on $E$ with dual norm $\norm{\ndot}^{\vee}$ on $\check{E}$, it induces a Fubini-Study metric $\phi_{\norm{\ndot}}$ on $L^{\an}$; it coincides with the model metric associated with the integral structure on $\bbP^n$ and $O(1)$ induced by any orthonormal basis $\underline{e}$ of $(E,\norm{\ndot})$; in formula $\abs{s(x)}_{\phi_{\norm{\ndot}}}=\abs{s(\hat{x})}/\norm{\hat{x}}^{\vee}$ for any $s\in E$ and $x\in(\bbP(E))^{\an}$ with an affine representative $\hat{x}\in (\bbA^{n+1}(E))^{\an}$, explicitly $\hat{x}\in \check{E}_{K'}$ for some valued field extension $K'/K$. Only such strict diagonalizable Fubini-Study metrics will be concerned. 

Fix an orthonormal basis $\underline{e}$ of $E$, for any $\hat{x}\in \check{E}_{K'}$ written as $\sum_{i=0}^d a_i e^{\vee}_i$, denote by $I_{\underline{e}}(\hat{x})$ the subset of $\Card{n}$ of indices such that $\abs{a_i}=\norm{\hat{x}}^{\vee}$, this assignment descends to $x\in \bbP(E)^{\an}$ giving a subset function $I_{\underline{e}}(x)$.

\subsection{Chow form and Chow norm}

\subsubsection{Chow hypersurface}~

Denote the diagonal embedding of $E$ (or $\check{E}$, or $\bbP(E)$, or $\bbP(\check{E})$) into its $(d+1)$-fold product by $\Delta$. Denote by $p$ and $q$ the projections from $\bbP(E)^{d+1}\times \bbP(\check{E})^{d+1}$ to the two factors. For $0\leq i\leq d$, denote by the projections to two factors and the incidence divisor $\bbI_i$ on the $i$-th copy $\bbP(E)\times \bbP(\check{E})$. 

With respect to the embedding into $\bbP(E)$ by $O(1)$, the Chow variety $\Ch(X)$ of $X$ is the hypersurface in $\bbP(\check{E})^{d+1}$ defined by the integral transform $q_*\{p^*\Delta_* X\cdot\prod_{i=0}^{d}\bbI_i\}$. The pull-back $\Delta^*\Ch(X)$ as a Cartier divisor is a hypersurface in $\bbP(\check{E})$.

The Chow form $R_X$ of $X$ is the multi-homogeneous polynomial of multi-degree $(\delta,\dots,\delta)$ in $\Xi:=(\underline{\xi}^0,\dots,\underline{\xi}^d)\in \check{E}^{d+1}$, unique up to a multiplicative scalar, that defines the Chow hypersurface; it is a section of $\boxtimes_{i=0}^d O(1)_{\check{\bbP}^n_i}^{\delta}$. It also determines a vector $R_X$ (up to scalar dilation) in $(\fS^{\delta} \check{E})^{\otimes d+1}$, denote by $\{R_X\}$ this one-dimensional $K$-vector space. The pull-back $\Delta^*R_X$ that defines $\Delta^*\Ch(X)$ is a homogeneous polynomial of degree $\delta(d+1)$ in $\underline{\xi}\in\check{E}$.

These constructions are compatible with respect to base field change $K'/K$.

\subsubsection{Non-Archimedean Chow metric}(see \cite{Gub}\cite{CM})~

Let $\norm{\ndot}$ be a strict $K$-Cartesian norm on $E$, its dual norm on the dual $\check{E}$ is denoted by $\norm{\cdot}^{\vee}$. It induces a norm on $\otimes^{\delta} \check{E}$ by $\delta$-fold ($\epsilon$-)tensor product of $\norm{\cdot}^{\vee}$ hence a quotient norm denoted by $\norm{\ndot}_{\fS_\delta^{\vee}}$ on $\fS^{\delta}\check{E}$. 
The induced norm on $(\fS^{\delta} \check{E})^{\otimes d+1}$, denoted by $\norm{\ndot}_{\fS_{\delta,d+1}^{\vee}}$, is the Chow norm induced by $\norm{\ndot}$. Explicitly, let $\underline{e}$ be an orthonormal basis of $(E,\norm{\ndot})$ and $\underline{\check{e}}$ be the dual orthonormal basis of $(\check{E},\norm{\ndot}^{\vee})$, write for an element $R$ of the above space as $R=\sum_{\abs{I}=\delta} a_I \underline{\check{e}}^I$, then $\norm{R}_{\fS_{\delta,d+1}^{\vee}}=\max_{\abs{I}=\delta}\{\abs{a_I}\}$; thus $\{R_X\}$ as a line bundle over $\spec K$ is equipped with this norm. 

Originally, the Chow norm was defined by a local arithmetic intersection number of metrized divisors (see \cite[Proposition 9.7]{Gub} or \cite[Theorem 3.9.7]{CM})
\[\log~(\norm{R_X}_{\fS_{\delta,d+1}^{\vee}}/\abs{R_X(\Xi)})=[\widehat{\Div}(\underline{\xi^0})\cdot\dots\cdot \widehat{\Div}(\underline{\xi^d})\cdot X^{\an}]_{(K,\abs{\ndot})}\]

The Chow metric can be calculated by dimension induction: write $X_1$ the intersection of $X$ by the hyperplane $\xi^0$ and $\Xi_{1}:=(\underline{\xi}^1,\dots,\underline{\xi}^d)\in \check{E}^{d}$, it holds that (counted with multiplicities) (see \cite[Formula (3.17)]{CM})
\[\log~(\norm{R_{X_1}}_{\fS_{\delta,d}^{\vee}}/\abs{R_{X_1}(\Xi_{1})})-\log~(\norm{R_X}_{\fS_{\delta,d+1}^{\vee}}/\abs{R_X(\Xi)})=\int_{X^{\an}}\log~\abs{\underline{\xi}^0}_{\phi_{\norm{\ndot}}}c_1(\phi_{\norm{\ndot}})^{d+1} \]


These constructions are compatible with respect to valued base field change $(K',\abs{\ndot}')/(K,\abs{\ndot})$.
\subsubsection{Intersection line bundle and Monge-Ampère measure}(see \cite{CL}\cite{BE})~

The intersection line bundle $\braket{O(1)|_X}^{\braket{d+1}}$ over $\spec K$ is determined by the image of $(d+1)$-fold product of $c_1(O(1)|_X)$ under the map $\Pic(X)^{d+1}\to\Pic(\spec K)$. Write $X_1$ the intersection of $X$ by the hyperplane $\xi^0$, the restriction induces an isomorphism $\braket{O(1)|_X}^{\braket{d+1}}\simeq \braket{O(1)|_{X_1}}^{\braket{d}}$.

The metric $\phi_{\norm{\ndot}}$ induces a Monge-Ampère measure denoted by $c_1(\phi_{\norm{\ndot}})^{\wedge d}$ on $X^{\an}$, it is supported on the finite set of Shilov points on $X^{\an}$ with respect to $(L,\phi_{\norm{\ndot}})$. A metric $\braket{\phi_{\norm{\ndot}}}^{\braket{d+1}}$ on the intersection line bundle can be determined by dimension induction: with respect to the restriction isomorphism, it holds that (counted with multiplicities)
\[\log\{\braket{\underline{\xi^0},\dots,\underline{\xi^d}}_{\braket{\phi_{\norm{\ndot}}}^{\braket{d+1}}}/\braket{\underline{\xi^1}|_{X_1},\dots,\underline{\xi^d}|_{X_1}}_{\braket{\phi_{\norm{\ndot}}|_{X_1}}^{\braket{d}}}\}=\int_{X^{\an}}\log\abs{\underline{\xi^0}}_{\phi_{\norm{\ndot}}}c_1(\phi_{\norm{\ndot}})^{\wedge d}.\]
The resulting metric depends on the norm $\norm{\ndot}$ in a multi-linear way and is continuous in each argument \cite[Theorem 8.16]{BE}. 

The intersection line bundle and the induced metric on it turn out to be given by the Chow form and Chow norm, as demonstrated in \cite[Theorem 1.4]{Zha1}\cite[Lemma 1.1, Proposition 1.2]{Bos1} (see also \cite{Sou}\cite{Phi})
\begin{proposition}\label{P: Deligne-Chow isometry}
There is an isomorphism of line bundles 
\[\braket{O(1)|_X}^{\braket{d+1}}\simeq \{R_X\}^{\vee}:\quad\braket{\Xi}\mapsto (R_X\mapsto R_X(\Xi))\]
This isomorphism is an isometry: for any $\Xi$ such that $R_X(\Xi)\neq 0$, it holds that
\[\log~\braket{\Xi}_{\braket{\phi_{\norm{\ndot}}}^{\braket{d+1}}}=\log~(\abs{R_X(\Xi)}/\norm{R_X}_{\fS_{\delta,d+1}^{\vee}}).\]
\end{proposition}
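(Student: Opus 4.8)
The plan is to split the statement into its algebraic content (existence of the isomorphism) and its metric content (the isometry), and to handle the latter by induction on $d=\dim X$. The underlying isomorphism $\braket{O(1)|_X}^{\braket{d+1}}\simeq\{R_X\}^{\vee}$ is the Deligne--Chow duality: the $(d+1)$-fold Deligne pairing of $O(1)|_X$ is canonically trivialized against the Chow form by the evaluation pairing $\braket{\Xi}\mapsto(R_X\mapsto R_X(\Xi))$, and I would recall this from \cite{Zha1}\cite{Bos1} (see also \cite{Sou}\cite{Phi}), checking only that it is independent of the chosen representatives and compatible with base change. All the content then lies in comparing the two norms on the one-dimensional space $\{R_X\}^{\vee}$ under this fixed identification; since both are multilinear and continuous in $\Xi$, their log-difference is a constant, so it suffices to show that this constant vanishes.

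For the inductive step I would exploit that the two metrics obey the same slicing recursion. Writing $X_1=X\cap\{\underline{\xi}^0=0\}$, the intersection-metric recursion and the Chow-metric recursion recalled above both have right-hand side $\int_{X^{\an}}\log\abs{\underline{\xi}^0}_{\phi_{\norm{\ndot}}}\,c_1(\phi_{\norm{\ndot}})^{\wedge d}$ (the measure written $c_1(\phi_{\norm{\ndot}})^{d+1}$ in the Chow recursion being the same top-dimensional Monge--Ampère measure on the $d$-dimensional $X^{\an}$). Subtracting the two formulas, the integral cancels, so the defect
\[
\log\braket{\Xi}_{\braket{\phi_{\norm{\ndot}}}^{\braket{d+1}}}-\log\bigl(\abs{R_X(\Xi)}/\norm{R_X}_{\fS_{\delta,d+1}^{\vee}}\bigr)
\]
for $X$ equals the corresponding defect for $X_1$ at the truncated argument $\Xi_1=(\underline{\xi}^1,\dots,\underline{\xi}^d)$, provided I verify that the evaluation isomorphism is compatible with the restriction isomorphism $\braket{O(1)|_X}^{\braket{d+1}}\simeq\braket{O(1)|_{X_1}}^{\braket{d}}$, i.e. that the passage from $R_X$ to $R_{X_1}$ is induced by slicing with $\underline{\xi}^0$. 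This reduces the claim in dimension $d$ to the claim in dimension $d-1$, and iterating brings everything down to $d=0$.

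The base case is an explicit computation with a $0$-cycle $X=\sum_j m_j[x_j]$ of degree $\delta=\sum_j m_j$. Here the single-copy Chow form factors (up to scalar) as $R_X=\prod_j L_{x_j}^{m_j}$, where $L_{x_j}$ is the linear form on $\check{E}$ cutting out the hyperplane dual to $x_j$, and the intersection line bundle is $\bigotimes_j(O(1)|_{x_j})^{\otimes m_j}$ with the product Fubini--Study fiber metric. Using the explicit formula $\abs{s(x)}_{\phi_{\norm{\ndot}}}=\abs{s(\hat{x})}/\norm{\hat{x}}^{\vee}$ recalled in \S2.1, the fiber metric of $\braket{\underline{\xi}}$ is $\prod_j\bigl(\abs{L_{x_j}(\underline{\xi})}/\norm{\hat{x}_j}^{\vee}\bigr)^{m_j}$, whereas $\abs{R_X(\underline{\xi})}=\prod_j\abs{L_{x_j}(\underline{\xi})}^{m_j}$; the two ratios coincide once I establish $\norm{R_X}_{\fS_{\delta,1}^{\vee}}=\prod_j(\norm{\hat{x}_j}^{\vee})^{m_j}$, i.e. that the dual symmetric norm is multiplicative on products of linear forms. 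This multiplicativity is precisely where strictness and the Cartesian (orthogonal) structure of $\norm{\ndot}$ enter: in an orthonormal basis the dual symmetric norm is the maximum of the coefficient absolute values, and for a decomposable product this maximum factorizes into the product of the factors' norms.

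The main obstacle I anticipate is the slicing compatibility in the inductive step: one must show that the abstract restriction isomorphism on Deligne pairings matches the specialization of Chow forms $R_X\to R_{X_1}$, and that the two recursion formulas are indexed so that the cancelling integrals genuinely coincide, multiplicities along $X_1$ included. This is a compatibility of the evaluation pairing with the geometry of hyperplane sections rather than an analytic estimate; once it is secured, the analytic input is entirely contained in the two recursions already recalled, and what remains is the bookkeeping of the induction together with the elementary norm computation of the base case.
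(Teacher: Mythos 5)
Your proposal is correct and takes essentially the same route as the paper: the paper's entire proof is the single observation that both metrics satisfy the same induction-by-dimension formula (the two slicing recursions recalled in \S 2.2), and your subtraction-of-defects argument is precisely that observation made explicit. What you add --- the constancy of the log-defect on the one-dimensional line, the reduction to the $d=0$ base case, and its verification via multiplicativity of the Gauss norm (non-archimedean Gauss lemma) on products of linear forms --- are exactly the details the paper leaves implicit, and they are handled (or, for the slicing compatibility, correctly flagged) in a sound way.
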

\begin{proof}
Both sides satisfy the same induction by dimension formula.
\end{proof}

\subsection{G.I.T.~stability and critical metric}
\subsubsection{General reminders about action and norm}(see \cite[Chap.~5]{Ber}\cite{RTW})~

Let $V$ be a vector space over $(K,\abs{\ndot})$ with a strict Cartesian norm $\norm{\ndot}$. Let $\bbG$ be a reductive algebraic group over $K$ acting linearly on $V$. The subset of $\bbG^{\an}$ whose action preserves the norm $\norm{\ndot}$ is an analytic subgroup, denoted by $\bbU_{\norm{\ndot}}$; it is maximal compact, and is a strict affinoid subset, equivalently it is induced by an integral model group $\KG$ over $K^{\circ}$. The corresponding residual group $\widetilde{\bbG}$ is the reduction of the affinoid group $U_{\norm{\ndot}}$ which is a reductive algebraic group over $\widetilde{K}$; the action of $\bbG$ on $V$ together with the norm $\norm{\ndot}$ induces the \emph{residual action} of $\widetilde{\bbG}$ on $\widetilde{V}$.

A vector $v\in V\setminus\{0\}$ is called \emph{semistable} if the Zariski closure of its $\bbG$-orbit does not contain $0$, \emph{(norm) minimal (with respect to $\bbU_{\norm{\ndot}}$)} if $\norm{v}=\inf_{g\in \bbU_{\norm{\ndot}}}\norm{g.v}$, and \emph{residually semistable} if, after rescaling so that $\norm{v}=1$, the reduction $\widetilde{v}\in\widetilde{V}$ is semistable for the residual action by $\widetilde{\bbG}$. 

As $\overline{K}=K$, any rank one torus $\ft$ of $\bbG$ is necessarily $\bbG_m$ given by $\spec K[T^{\pm 1}]$. A torus $\bbT$ of $\bbG$ is called \emph{compatible with $\norm{\ndot}$} if it is simultaneously diagonalizable with respect to an orthonormal basis of $\norm{\ndot}$. This condition implies that $\bbT$ comes from a torus of the integral model $\KG$.

\subsubsection{Kempf-Ness criteria over non-Archimedean fields}(see \cite{Bur}\cite{Mac})~

The non-Archimedean version of the Kempf-Ness criteria is a metric approach to GIT stability\footnote{Established by Burnol for $p$-adic fields and generalized by Maculan to non-Archimedean fields.}; it shall be used in computations of the criticality condition in this and the next section.
\begin{criteria}\label{C: KN criteria}
Assume that\footnote{Only here $K$ is not assumed to be algebraically closed.} $(K,\abs{\ndot})$ is discrete and universally Japanese. The following are equivalent for a non-zero vector $v$ of $(V,\norm{\ndot})$ under a linear action of $\bbG$: (1) it is norm minimal with respect to $\bbU_{\norm{\ndot}}$; (1)'(possibly after passing to $\overline{K}$) it is norm minimal with respect to any rank one torus $\ft$ (or with respect to any maximal torus $\bbT$) of $\bbG$ that is compatible with $\norm{\ndot}$; (2) it is residually semistable; (3) there exists an invariant homogeneous polynomial $f$ in $(\fS_{K^{\circ}}V^{\circ})^{\bbU_{\norm{\ndot}}}$ such that $\abs{f(v)}=\norm{v}^{\frac{1}{\deg f}}$. \cite[Proposition 1]{Bur}\cite[Theorem 4.9]{Mac}
\end{criteria}
\begin{remark}
Note that the two assumptions on the base field are required so as to apply results in \cite[Theorem 2]{Ses} to deduce the finite generation for the corresponding invariant algebra over $K^{\circ}$. If the fininteness can be obtained from other ways, then these assumptions can be dropped; in particular, the criteria applies in our situation with algebraically closed $K$ and action by $\bbS\bbL(n+1)$ or $\bbG\bbL(n+1)$ on tensor powers of $(E,\norm{\ndot})$ since their algebras of invariants over $K^\circ$ are known to be finitely generated.
\end{remark}

\subsubsection{Chow norm criticality}(\cite{Zha1}\cite{Bos2})~

Consider the action of $\bbS\bbL(n+1)$ on $E$ and the induced action on $(\fS^{\delta} \check{E})^{\otimes d+1}$. Equip $(\fS^{\delta} \check{E})^{\otimes d+1}$ with the norm $\norm{\ndot}_{\fS_{\delta,d+1}^{\vee}}$ induced by $\norm{\ndot}$. The above action induces an action on cycles of $\bbP^n$, compatible with the formation of Chow forms/divisors in the sense that $R_{g.X}=g.R_{X}$ for any $g\in \bbS\bbL(n+1)(K)$. Write $\mu_X(g)$ the function $\bbS\bbL(n+1)(K) \ni g\mapsto \norm{g.R_{X}}_{\fS_{\delta,d+1}^{\vee}}$, as $\bbS\bbL(n+1)$ is an affine variety, by valued base field change $(K',\abs{\ndot})/(K,\abs{\ndot})$ this extends to a continuous function on $\bbS\bbL(n+1)^{\an}$. Since for any $\Xi$ and any $R_X$ with $R_X(\Xi)\neq 0$, the regular function $g\mapsto g.R_X(\Xi)/R_X(\Xi)$ is a character of $\bbG\bbL(n+1)$, hence is a multiple of $\det (g)$, thus the action by $\bbS\bbL(n+1)$ preserves $\abs{R_X(\Xi)}$.

For any rank one torus $\ft$ of $\bbS\bbL(n+1)$, the restriction of $\mu_X(\cdot)$ to $\ft^{\an}$ descends to a function $\mu_{X,\ft}(\cdot)$ on $\bbR$ via the map $\bbG_m^{\an}\to\bbR$ by $g\mapsto \log~\abs{z(g)}$; write $\gamma$ a section. If the action of $\ft$ is compatible with $\norm{\ndot}$ with a diagonalizing orthonormal basis $\underline{e}$, there are weights $\underline{\lambda}\in\underline{\bbR}$ satisfying $\sum_{i=0}^n\lambda_i=0 $ such that $\norm{\gamma_t.e_i}=\mathrm{e}^{\lambda_it}$ for any section, and write $\norm{\ndot}_t$ the unique Cartesian norm on $E$ admitting $\underline{e}$ as orthogonal basis with $\norm{e_i}_t=\mathrm{e}^{\lambda_it}$, its dual norm is the one admitting $\underline{e}^{\vee}$ as orthogonal basis with $\norm{e_i^{\vee}}^{\vee}_t=\mathrm{e}^{-\lambda_it}$,

The algebraic group $\bbS\bbL(n+1)$ acts on $E$, hence its $K$-points acts on the space of strict diagonalizable norms on $E$: if $\underline{e}$ is an orthonormal basis for $\norm{\ndot}$, then $g^*\norm{\ndot}$ is the norm with orthonormal basis $g.\underline{e}$; consequently it induces a norm $g^*\norm{\ndot}_{\fS_{\delta,d+1}^{\vee}}$ on $(\fS^{\delta} \check{E})^{\otimes d+1}$ and a metric $\braket{\phi_{g^*\norm{\ndot}}}^{\braket{d+1}}$. Actually, by construction $g^*\norm{R_X}_{\fS_{\delta,d+1}^{\vee}}$ is equal to $\norm{g.R_X}_{\fS_{\delta,d+1}^{\vee}}$ and to $\norm{R_{g.X}}_{\fS_{\delta,d+1}^{\vee}}$, hence also to $\braket{\Xi}_{\braket{\phi_{g^*\norm{\ndot}}}^{\braket{d+1}}}$. This equivariance is compatible with any valued base field extension. In particular, for a compatible rank one torus $\ft$, it holds that $\mu_{X,\ft}(t)$ is equal to $\norm{\gamma_t.R_X}_{\fS_{\delta,d+1}^{\vee}}$ and to $\braket{\phi_{\norm{\ndot}_t}}^{\braket{d+1}}$.

\begin{definition}
A strict diagonalizable Fubini-Study metric $\phi_{\norm{\ndot}}$ on $O(1)$ is \emph{critical} if the function $\mu_X(g)$ achieves minimum at the identity element of $\bbS\bbL(n+1)^{\an}$.
\end{definition}
\begin{remark}
By Proposition \ref{P: Deligne-Chow isometry}, this condition is equivalent to the minimality of the metric $\braket{\phi_{\norm{\ndot}}}^{\braket{d+1}}$ under the $\bbS\bbL(n+1)$ action, as the action preserves $\abs{R_X(\Xi)}$. Moreover, it suffices to test the minimality at $t=0$ of $\mu_{X,\ft}(t)$ for any rank one torus $\ft$ of $\bbS\bbL(n+1)$ whose action is compatible with $\norm{\ndot}$. 
\end{remark}

\subsection{Critical Fubini-Study metric on ample line bundle}~

Write $\pmb{1}$ the diagonal vector $\sum_{i\in\Card{n}}\bfe_i$ of $\bbR^{n+1}$, any vector $\lambda\in \bbR^{n+1}$ with $ \sum_{i=0}^n\lambda_i=0$ defines a linear function $\lambda(\ndot)$ on $\bbR^{n+1}/\pmb{1}\bbR$; denote by $\pmb{0}$ the origin of the quotient space. For any $I\subseteq\Card{n}$, let $\triangle_{I}$ be the simplex in $\bbR^{n+1}/\pmb{1}\bbR$ spanned by (images of) $\{e_i\}_{i\in I}$. Write $\sum$ for the Minkowski sum of polytopes and $\mu\cdot$ for the scaling of a polytope by $\mu\in\bbR_{+}$. 

Write the Monge-Ampère measure $c_1(\phi_{\norm{\cdot}})^{\wedge d}$ as $\sum_{a\in\Sh}\mu_a\delta_{\gamma_a}$, with $\Sh$ the finite set of index for Shilov points $\{\gamma_a\}$ on $X^{\an}$. For any $\gamma_a$ viewed as a point in $\bbP(E)^{\an}$, write $I_{\underline{e}}(\gamma_a)$ simply as $I_{\underline{e}}(a)$, namely the subset of $\Card{n}$ such that $\abs{e_i(\gamma_a)}_{\phi_{\norm{\cdot}}}=1$ (i.e.~the function $\abs{e_i(\cdot)}_{\phi_{\norm{\cdot}}}$ achieves its maximum $1$ at $z=\gamma_a$). 
\begin{definition}
With respect to the orthonomal basis $\underline{e}$, the \emph{Bergman distorsion} $b_{\underline{e}}(x)[\ndot]$ is the functional $\lambda\mapsto\max_{i\in I_{\underline{e}}(x)}\{-\lambda_i\}$ for any $x\in\bbP(E)^{\an}$; the \emph{Monge-Ampère polytope} $P_{MA}(X,\norm{\cdot},\underline{e})$ is the polytope $\sum_{a\in\Sh}\mu_a \triangle_{I(a)}$ in $\bbR^{n+1}/\pmb{1}\bbR$.
\end{definition}

\begin{lemma}\label{L: variation of FS metric}
Let $\ft$ be a rank one torus compatible with $\norm{\ndot}$, with a diagonalizing orthonormal basis $\underline{e}$ and corresponding action weight $\lambda$. Then for $x\in(\bbP^n)^{\an}$ and $t\in\bbR$ it holds that
\[\phi_{\norm{\ndot}_t}(x)-\phi_{\norm{\ndot}_0}(x)=-\log~(\norm{\hat{x}}^{\vee}_0/\norm{\hat{x}}^{\vee}_t)=t\cdot b_{\underline{e}}(x)[\lambda].\]
\end{lemma}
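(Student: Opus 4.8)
The plan is to read the stated identity as a chain of two equalities and to treat them separately: the left-hand equality is a formal consequence of the defining formula for the Fubini--Study metric, while the right-hand equality is an elementary ultrametric computation of the dual norm $\norm{\ndot}^{\vee}_t$ carried out in the fixed orthonormal basis $\underline{e}$.

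First I would establish the left equality. Fix $x\in(\bbP^n)^{\an}$ together with an affine representative $\hat{x}\in\check{E}_{K'}$; this representative, and hence the value $s(\hat{x})$ of any section $s\in E$, does not depend on $t$. By the defining relation $\abs{s(x)}_{\phi_{\norm{\ndot}_t}}=\abs{s(\hat{x})}/\norm{\hat{x}}^{\vee}_t$, the only $t$-dependence of the induced weight sits in the denominator, so forming the difference of weights cancels the contribution of $\abs{s(\hat{x})}$ and yields
\[\phi_{\norm{\ndot}_t}(x)-\phi_{\norm{\ndot}_0}(x)=\log\norm{\hat{x}}^{\vee}_t-\log\norm{\hat{x}}^{\vee}_0=-\log(\norm{\hat{x}}^{\vee}_0/\norm{\hat{x}}^{\vee}_t).\]
This step is purely formal and uses nothing beyond the definition of the Fubini--Study metric and the sign convention for metric weights.

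For the right equality I would expand the dual norm in the basis $\underline{e}^{\vee}$. Writing $\hat{x}=\sum_{i}a_ie^{\vee}_i$ and using that $\underline{e}^{\vee}$ is orthogonal for $\norm{\ndot}^{\vee}_t$ with $\norm{e^{\vee}_i}^{\vee}_t=\mathrm{e}^{-\lambda_it}$, the Cartesian (ultrametric) property gives $\norm{\hat{x}}^{\vee}_t=\max_{i}\abs{a_i}\,\mathrm{e}^{-\lambda_it}$. In particular $\norm{\hat{x}}^{\vee}_0=\max_i\abs{a_i}=:M$, and by definition $I_{\underline{e}}(x)$ is exactly the set of indices attaining this maximum. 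Factoring out $M$ I would write $\log\norm{\hat{x}}^{\vee}_t-\log M=\log\max_i(\abs{a_i}/M)\,\mathrm{e}^{-\lambda_it}$, where the ratio equals $1$ for $i\in I_{\underline{e}}(x)$ and is strictly less than $1$ for $i\notin I_{\underline{e}}(x)$. On the range of $t$ for which the maximum is attained on $I_{\underline{e}}(x)$ the expression therefore reduces to $\log\max_{i\in I_{\underline{e}}(x)}\mathrm{e}^{-\lambda_it}=t\max_{i\in I_{\underline{e}}(x)}(-\lambda_i)=t\cdot b_{\underline{e}}(x)[\lambda]$, as claimed.

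The step I expect to be the genuine obstacle is precisely this localisation of the maximum to $I_{\underline{e}}(x)$: because $\norm{\hat{x}}^{\vee}_t$ is a maximum of the monomials $\abs{a_i}\mathrm{e}^{-\lambda_it}$, the function $t\mapsto\log\norm{\hat{x}}^{\vee}_t$ is in general only convex and piecewise linear, and the clean linear identity $t\cdot b_{\underline{e}}(x)[\lambda]$ is valid on the (one-sided) neighbourhood of $0$ where the indices outside $I_{\underline{e}}(x)$ have not yet overtaken those inside. I would make this quantitative by fixing, for each $i\notin I_{\underline{e}}(x)$ with $a_i\neq 0$, the threshold beyond which $\abs{a_i}\mathrm{e}^{-\lambda_it}$ surpasses $M\,\mathrm{e}^{t\max_{j\in I_{\underline{e}}(x)}(-\lambda_j)}$; below the minimum of these thresholds the identity holds exactly, and its right-hand slope at $t=0$ is the Bergman distortion $b_{\underline{e}}(x)[\lambda]$. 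Since it is exactly this first-order datum that is integrated against $c_1(\phi_{\norm{\ndot}})^{\wedge d}$ in the variation of the intersection metric, and since the support-function description of criticality (whence $\pmb{0}\in P_{\MA}$) only reads off this slope, the identity in the relevant regime suffices for the applications that follow.
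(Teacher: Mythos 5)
Your argument is correct and follows the same two-step route as the paper's own proof: cancel $\abs{s(\hat{x})}$ in the defining formula $\abs{s(x)}_{\phi_{\norm{\ndot}_t}}=\abs{s(\hat{x})}/\norm{\hat{x}}^{\vee}_t$ to get the left equality, then expand $\norm{\hat{x}}^{\vee}_t=\max_{i}\abs{a_i}\,\mathrm{e}^{-\lambda_i t}$ in the diagonalizing orthonormal basis to get the right one. The one point where you diverge is your explicit restriction to the range of $t$ on which the maximum is still attained on $I_{\underline{e}}(x)$, and there you are in fact more careful than the paper: its proof asserts $\log(\norm{\hat{x}}^{\vee}_t/\norm{\hat{x}}^{\vee}_0)=t\cdot\max_{i\in I_{\underline{e}}(\hat{x})}\{-\lambda_i\}$ with no qualification, but, as you observe, this fails for $t<0$ whenever the weights $\lambda_i$, $i\in I_{\underline{e}}(x)$, are not all equal (the slope there is $\min_{i\in I_{\underline{e}}(x)}\{-\lambda_i\}$), and can fail for large $t>0$ once an index outside $I_{\underline{e}}(x)$ overtakes those inside. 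So the lemma's phrase ``for $t\in\bbR$'' must be read the way you read it: as an exact identity on a one-sided neighbourhood of $t=0$, i.e.\ as a statement about the right derivative, which is precisely what the proof of Theorem \ref{T: critical metric} consumes through $\frac{d}{dt}|_{0^+}$ and the integration of $b_{\underline{e}}(x)[\lambda]$ against the Monge--Amp\`ere measure.
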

\begin{proof}
Choose an affine lift $\hat{x}$ for $x$ and any $s\in E$ with $s(x)\neq 0$, one has
\[\phi_{\norm{\ndot}^{\vee}_t}(x)-\phi_{\norm{\ndot}^{\vee}_0}(x)=\log(\abs{s(\hat{x})}/\norm{\hat{x}}^{\vee}_t)-\log(\abs{s(\hat{x})}/\norm{\hat{x}}^{\vee}_0)=-\log~(\norm{\hat{x}_t}^{\vee}/\norm{\hat{x}}^{\vee}_0).\]
Write $\hat{x}=\sum a_ie_i^{\vee}\in E^{\vee}_{K'}$, the choice of orthonormal coordinates implies that
\begin{equation*}
    \begin{split}
        &\log~(\norm{\hat{x}}^{\vee}_t/\norm{\hat{x}}^{\vee}_0)=\log~(\max_{0\leq i\leq n}\{\abs{a_i}\mathrm{e}^{-\lambda_i t}\}/ \max_{0\leq i\leq n}\{\abs{a_i}\})\\
        =&t\cdot \max_{i\in I_{\underline{e}}(\hat{x})}\{-\lambda_i\}=t\cdot  b_{\underline{e}}(x)[\lambda].
    \end{split}
\end{equation*}

\end{proof}

\begin{theorem}\label{T: critical metric}
Assume that $(X,O(1))$ is Chow stable. A diagonalizable Fubini-Study metric $\phi_{\norm{\ndot}}$ is critical if and only if $\pmb{0}\in P_{MA}(X,\norm{\cdot})$.
\end{theorem}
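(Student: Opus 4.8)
The plan is to reduce the global minimality defining criticality to a one–parameter condition, to compute the corresponding one–sided derivative, and to recognize it as the support function of $P_{MA}$, so that nonnegativity in every direction becomes the membership $\pmb{0}\in P_{MA}$. First I would invoke the Remark following the definition of critical metric to replace criticality by the requirement that, for every rank one torus $\ft$ of $\bbS\bbL(n+1)$ compatible with $\norm{\ndot}$, the function $\mu_{X,\ft}(t)$ attains its minimum at $t=0$. This reduction is the non-Archimedean Kempf–Ness dictionary (Criteria \ref{C: KN criteria}), applicable here because $\bbS\bbL(n+1)$ acts on $(\fS^{\delta}\check{E})^{\otimes d+1}$ with finitely generated ring of invariants over $K^{\circ}$; the Chow stability hypothesis enters to guarantee that $R_X$ is semistable, so that $\inf_{g}\norm{g.R_X}_{\fS_{\delta,d+1}^{\vee}}>0$ and the minimization problem is non-degenerate (infimum positive and attained), which is what legitimizes the passage to one-parameter subgroups.

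Next, fixing a compatible $\ft$ with diagonalizing orthonormal basis $\underline{e}$ and weight $\lambda$, I would record that its logarithm $\log\mu_{X,\ft}(t)=\log\norm{\gamma_t.R_X}_{\fS_{\delta,d+1}^{\vee}}=\max_{w}\{\log\abs{a_w}-t\,\lambda(w)\}$ (expanding $R_X=\sum_w a_w\check{e}^{w}$ in the orthonormal monomial basis, along which $\gamma_t$ scales $\check{e}^{w}$ by a scalar of absolute value $\mathrm{e}^{-t\lambda(w)}$) is a finite maximum of affine functions, hence convex and piecewise linear, with the same minimizers as $\mu_{X,\ft}$; the same convexity is also visible from that of the intersection energy $t\mapsto\braket{\phi_{\norm{\ndot}_t}}^{\braket{d+1}}$. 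Since replacing $\lambda$ by $-\lambda$ reverses $t$, testing minimality at $t=0$ for all compatible $\ft$ is equivalent to requiring $\tfrac{d}{dt}\big|_{0^{+}}\log\mu_{X,\ft}(t)\ge 0$ for all of them: the $-\lambda$ torus converts this into the matching left-derivative inequality for $\lambda$, and convexity then upgrades the pair of inequalities to an honest minimum at $t=0$.

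The heart of the argument is the evaluation of this right derivative. Passing to the intersection metric via Proposition \ref{P: Deligne-Chow isometry}, its first variation (the intersection metric being multilinear and continuous in the metric, \cite[Theorem 8.16]{BE}, through the induction-by-dimension formula) together with Lemma \ref{L: variation of FS metric} — which identifies the infinitesimal change of $\phi_{\norm{\ndot}_t}$ along $\ft$ with $b_{\underline{e}}(\cdot)[\lambda]$ — should give
\[
\frac{d}{dt}\Big|_{t=0^{+}}\log\mu_{X,\ft}(t)=(d+1)\int_{X^{\an}}b_{\underline{e}}(x)[\lambda]\,c_1(\phi_{\norm{\ndot}})^{\wedge d}.
\]
Evaluating against $c_1(\phi_{\norm{\ndot}})^{\wedge d}=\sum_{a\in\Sh}\mu_a\delta_{\gamma_a}$ and using $b_{\underline{e}}(\gamma_a)[\lambda]=\max_{i\in I(a)}\{-\lambda_i\}=\max_{v\in\triangle_{I(a)}}\{-\lambda(v)\}$, additivity of support functions under Minkowski sums shows the right derivative equals $(d+1)\max_{v\in P_{MA}}\{-\lambda(v)\}$, the value at $-\lambda$ of the support function of $P_{MA}=\sum_{a}\mu_a\triangle_{I(a)}$. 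Hence criticality is equivalent to $\max_{v\in P_{MA}}\{-\lambda(v)\}\ge 0$ for every $\lambda$ with $\sum_i\lambda_i=0$, which by the separating–hyperplane description of support functions (a compact convex set contains the origin iff its support function is everywhere nonnegative) is exactly $\pmb{0}\in P_{MA}$.

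I expect two steps to be delicate. The first is the first–variation formula itself: in the non-Archimedean setting the Monge–Ampère measure and its Shilov points move with $t$, so I must show that the measure-variation terms cancel (as in the first variation of the Monge–Ampère energy) and that the one-sided derivative is genuinely computed against the measure $c_1(\phi_{\norm{\ndot}})^{\wedge d}$ at $t=0$; the multilinearity and continuity of the intersection metric is the tool here, and matching signs against the Deligne–Chow isometry is where care is needed. The second is that a compatible torus is diagonal only in \emph{some} orthonormal basis, so a priori the computation yields $\pmb{0}\in P_{MA}(X,\norm{\ndot},\underline{e}')$ for every orthonormal basis $\underline{e}'$; reducing this to the single stated condition amounts to the independence of $\pmb{0}$-membership of $P_{MA}$ from the chosen orthonormal basis, which reflects the intrinsic (residual-semistability) content of Criteria \ref{C: KN criteria} and is the point I would treat most carefully.
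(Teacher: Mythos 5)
Your proposal follows essentially the same route as the paper's own proof of Theorem \ref{T: critical metric}: reduction of criticality to minimality along rank-one tori compatible with $\norm{\ndot}$ (the Remark after the definition, backed by Criteria \ref{C: KN criteria}), computation of the one-sided derivative of the Chow height through the multilinearity and continuity of the intersection metric together with Lemma \ref{L: variation of FS metric}, and identification of that derivative with the value at $-\lambda$ of the support function of $P_{MA}=\sum_a\mu_a\triangle_{I(a)}$, so that nonnegativity in all directions becomes $\pmb{0}\in P_{MA}$. The extra care you take --- convexity of $t\mapsto\log\mu_{X,\ft}(t)$, the $\lambda\mapsto-\lambda$ symmetry upgrading one-sided nonnegativity to a genuine minimum, and the dependence of $P_{MA}$ on the diagonalizing orthonormal basis --- concerns points the paper's proof leaves implicit, so your write-up refines rather than diverges from the paper's argument.
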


\begin{proof}
Let $\ft$ be a rank one torus of $\bbS\bbL(n+1)$ compatible with $\norm{\ndot}$, choose an orthonormal basis $\{e_i\}$ for $\norm{\cdot}$ that diagonalizes $\ft$, by the continuity of metric on the intersection line bundle in each of its argument, the variation of Chow height is (write $\phi^{((k))}$ for the $k$-fold repetition of $\phi$ in $\braket{\dots}$)
\begin{equation*}
    \begin{split}
        &\braket{\phi_{\norm{\cdot}_t}}^{\braket{d+1}}-\braket{\phi_{\norm{\cdot}_0}}^{\braket{d+1}}\\
        =&\sum_{k=0}^{d}\braket{\phi_{\norm{\cdot}_t}-\phi_{\norm{\cdot}_0},\phi_{\norm{\cdot}_t}^{((d-k))},\phi_{\norm{\cdot}_0}^{((k))}}\\
        =&(d+1)\braket{\phi_{\norm{\cdot}_t}-\phi_{\norm{\cdot}_0},\phi_{\norm{\cdot}_0}^{((d))}}+o(t^2)\\
        =&t\cdot (d+1)\braket{b_{\underline{e}}(x)[\lambda], \phi_{\norm{\cdot}_0}^{((d))}}+o(t^2)
    \end{split}
\end{equation*}

By properties of intersection pairing metric and of Monge-Ampère measure,
\begin{equation*}
    \begin{split}
        \frac{d}{dt}|_{0^+}\braket{\phi_{\norm{\cdot}_t}}^{\braket{d+1}}&=(d+1)\int_{X^{\an}}b_{\underline{e}}(x)[\lambda]\cdot c_1(\phi_{\norm{\cdot}_0})^{\wedge d}\\
        &=(d+1)\sum_{a\in \Sh}\mu_a\cdot b_{\underline{e}}(\gamma_a)[\lambda].
    \end{split}
\end{equation*}

By convex analysis, the last term can be expressed as
\[\sum_{a\in \Sh}\mu_a \cdot b_{\underline{e}}(\gamma_a)[\lambda]=\sum_{a\in\Sh}\max_{\mu_a\Delta_{I(a)}}\{-\lambda(\ndot)\}=\max_{\sum_{a\in\Sh}\mu_a\Delta_{I(a)}}\{-\underline{\lambda}(\ndot)\}=\max_{P_{\MA}(\norm{\cdot})}\{-\lambda(\ndot)\}.\]
That $\phi_{\norm{\ndot}}$ is critical, namely the above variation being non-negative for all $\lambda\in\bbR^{n+1}$ with $\sum\lambda_i=0$, is equivalent to the containment $\pmb{0}\in P_{MA}(X,\norm{\cdot},\underline{e})$.
\end{proof}
\begin{remark}
These computations are analogues of \cite[Theorem 3.4]{Zha1}.
\end{remark}

\section{Tropical aspects}

\subsection{Tropical cycles, $\delta$-forms and intersections}(see \cite{CLD}\cite{GK}\cite{Mih})~

We shall make use of tropical forms and currents on $\bbR$-affine spaces, more precisely the calculus of $\delta$-forms; in this article only the constant-weighted polyhedral complexes and piecewise-linear functions appear, as all our metric objects actually come from integral models. 

A tropical cycle in $\bbR^n$ is a locally finite polyhedral complex $\KC$ (of pure dimension $d$) in $\bbR^n$ with a collection of (constant-)weights $\{\mu_\sigma\}_{\sigma\in\KC_{(d)}}$ satisfying the balancing condition at each facet. There is an intersection product $\cdot$ for tropical cycles using the stable tropical intersection which can be calculated using the fan displacement rule. Pull-back satisfies the graph construction and direct image satisfies projection formula: for an affine map $F:\bbR^m\to \bbR^n$ and tropical cycles $C$ in $\bbR^m$ and $D$ in $\bbR^n$, write $\pi_i$ the projections from $\bbR^m\times\bbR^n$, it holds that
\[F^*(C)=(\pi_{1})_*\{(\pi_2)^*(C)\cdot \bfGamma_{F}\},\quad F_*\{F^*C\cdot D\}=F_*(D)\cdot C. \]

A $\delta$-form $B$ is a combination $\sum_{\sigma\in\KC} \alpha_{\sigma}\wedge\delta_{[\sigma,\mu_{\sigma}]}$ for a collection of (Lagerberg) (super-)forms $\{\alpha_{\sigma}\}_{\sigma\in\KC}$ on weighted polyhedra $\{[\sigma,\mu_{\sigma}]\}_{\sigma\in\KC}$ that satisfy the balancing condition at each facet. Viewed as a (super-)current $[B]$ by integration, they can be characterized by the condition that both $[B]$ and its differentials $d'[B]$ and $d''[B]$ are polyhedral \cite[Theorem 1.1]{Mih}.
Pull-back and push-forward operations exists. There is a graded-commutative $\wedge$-product on $\delta$-forms that extends the $\wedge$-product of smooth forms and the stable intersection product of tropical cycles. Pull-back satisfies the graph construction and push-forwards satisfies the projection formula: for an affine map $F:\bbR^m\to \bbR^n$ and $\delta$-forms $S$ in $\bbR^n$ and $T$ in $\bbR^m$
\[F^*(S)=(\pi_{1})_*\{(\pi_2)^*(S)\wedge \delta_{\bfGamma_{F}}\},\quad S\wedge F_*(T)=F_*\{F^*(S)\wedge T\}.\]
There are differential operators $d$ with components $d'$ and $d''$ on $\delta$-forms viewed as (super-)currents acting by integration on (super-)forms. In particular, for a piecewise linear function $\bff$ on $\bbR^n$, denote by $[\bff]$ the $(n,n)$-current $\bff\wedge[\bbR^n,\mu_{\mathrm{st}}]$, its tropical divisor $\Div(\bff)$ as a $(n-1,n-1)$-current is given by $\dd [\bff]$, also viewed as a $(1,1)$ $\delta$-form. 

\subsection{Tropicalization of the incidence divisor}~

Write $N$ the lattice $\bbZ^{n+1}/\pmb{1}$ and $M$ its dual lattice $\pmb{1}^{\perp}$ as a sub-lattice of $\check{\bbZ}^{n+1}$. The associated $\bbR$-vector spaces $N_{\bbR}$ and $M_{\bbR}$ are $\bbR^{n+1}/\pmb{1}$ and $(\bbR^{n+1}/\pmb{1})^{\vee}$. 

Choose homogeneous coordinates $\{x_i\}$ and denote by $\bbT$ the rank $n$ split algebraic torus in $\bbP^n$ defined by the condition $\prod_{i=0}^n x_i\neq 0$; write $\check{\bbT}$ the dual torus in $\check{\bbP}^n$; they are toric varieties associated with lattices $M$ and $N$. Tropicalization gives a continuous map $\bbT^{\an}\to N_{\bbR}$ by associating to $z$ the dual character $ (m\mapsto -\log~ \abs{\underline{x}^m}_z)$, concretely the element $(-\log~\abs{\underline{\hat{z}}})/\pmb{1}$ for any affine representative $\hat{z}\in E^{\vee}_{K'}$; similar construction holds for $\check{\bbT}^{\an}\to M_{\bbR}$; denote these maps by $\trop$.

Write $\iota:\bbR^{n+1}\simeq \check{\bbR}^{n+1}$ the identification map (and its inverse) assigning to $a$ the dual element defined by $a'\mapsto\sum a_i\cdot a_i'$, and by abuse of notation, also denote by $\iota$ the induced map by the composition $M_{\bbR}\to \check{\bbR}^{n+1}\xrightarrow{\iota}\bbR^{n+1}\to N_{\bbR}$ (and its inverse). Explicitly in coordinates, $\iota$ identifies the element $\lambda\in M_{\bbR}$ satisfying $\sum\lambda_i=0$ with the element $\lambda/\pmb{1}\in N_{\bbR}$. Write $\bfu$ the function $c\mapsto\min_{i\in\Card{n}}\{c_i\}$ on $\check{\bbR}^{n+1}$ and by abuse of notation the induced function on $M_{\bbR}$ via restriction.

Denote by $\bfX$ the tropicalization of (the restriction to $\bbT$) of $X$,  it is a tropical cycle in $N_{\bbR}$; denote by $\bfX_{\iota}$ the tropical cycle in $M_{\bbR}$ via the map $\iota$. Denote by $\bfH$ the tropical divisor of $\bfu$, namely the standard tropical hyperplane in $M_{\bbR}$. Write $\bfH^{\cdot l}$ its $l$-fold self-intersection, it is indeed the standard sub-fan of $\bfH$ with constant weights of codimension $l+1$ in $M_{\bbR}$. Let $\bfI$ be the tropicalization of the (restriction to $\bbT\times \check{\bbT}$) incidence divisor $\bbI$ in $N_{\bbR}\times M_{\bbR}$, and write $\bfI_{\iota}$ the tropical divisor $(\iota,\Id)^*\bfI$ in $M_{\bbR}\times M_{\bbR}$.

Note that as a $\delta$-form of bidegree $(1,1)$, $\delta_{\bfH}$ is just $\dd \bfu$; moreover, $\delta_{\bfX_{\iota}}\wedge (\dd \bfu)^{\wedge d}$ is the $\delta$-form corresponding to the tropical $0$-cycle $\bfX_{\iota}\cdot \bfH^{\cdot d}$. The Monge-Ampère measure $c_1(\phi_{\norm{\ndot}})^{\wedge d}$ on $X^{\an}$ can be defined as a tropical current by the theory of differential forms and currents on Berkovich spaces. It can be presented on the tropicalization $N_{\bbR}$, hence also on $M_{\bbR}$ via $\iota$, as $\sum_{a\in\Sh_{\underline{e}}}\mu_a\delta_{\trop(\gamma_a)}$.

Take copies of $M_{\bbR}$ (lower indexed by $\{1,2,3\}$), for $A\subseteq B$ two subsets of $\{1,2,3\}$ write $\bfp^B_A$ the projection map $\prod_{i\in B}M_{\bbR,i}\to \prod_{i\in A}M_{\bbR,i}$ (and write just $p_A$ if $B$ is the full set). Consider linear maps $(+)_{12}$ the addition $M_{\bbR,1}\times M_{\bbR,2}\to M_{\bbR,3}$ and $(-)_{13}$ the subtraction $M_{\bbR,1}\times M_{\bbR,3}\to M_{\bbR,2}$. Denote by $\bfGamma$ the graph of a linear map, then $\bfGamma_{(+)_{12}}$ coincides with $\bfGamma_{(-)_{13}}$ in $M_{\bbR,1}\times M_{\bbR,2}\times M_{\bbR,3}$. 

The push-forwards induces convolutions for two $\delta$-forms on $M_{\bbR}$, denoted by $\boxplus$ and $\boxminus$; on integration $\delta$-forms over tropical cycles, these convolutions are given by the stable Minkowski sum and difference for two tropical cycles: for $\alpha$ and $\beta$ two $\delta$-forms and $A$, $B$ two tropical cycles in $M_{\bbR}$, it holds that
\[\alpha\boxplus\beta:=((+)_{12})_*(\bfp^{12}_1\alpha\wedge \bfp^{12}_2\beta),\quad \delta_A\boxplus \delta_B=\delta_{A\boxplus B}.\]
Similarly $\boxminus$ can be defined. Denote by $\bfD$ the diagonal of $M_{\bbR}\times M_{\bbR}$, it holds that $(-)^*\delta_{\pmb{0}}=\delta_{\bfD}$.

\begin{lemma}
The following equality holds
\[\bfI_{\iota}=((+)_{12})^*\dd \bfu=((+)_{12})^*\bfH.\]
\[(\bfp_1^{12})^*\bfX_{\iota}\cdot \bfI_{\iota}=(\bfp_{12})_*\{\bfp_1^*\bfX_{\iota}\cdot \bfp_3^*\bfH\cdot \bfGamma_{(-)_{13}}\}\]
\end{lemma}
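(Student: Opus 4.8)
The plan is to handle the two displayed identities in turn. The first is the only genuine computation: it pins down the tropicalized incidence divisor as a pullback of $\bfu$ along the addition map. The second is then purely formal, obtained by combining the projection formula, the graph construction for pullbacks, and the first identity.

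\textbf{First identity.} I would begin from the explicit equation $\sum_i x_i\xi_i$ cutting out the incidence divisor $\bbI\subseteq\bbP(E)\times\bbP(\check{E})$. Restricting to $\bbT\times\check{\bbT}$, all coefficients are units, so the tropicalization $\bfI$ in $N_\bbR\times M_\bbR$ is the corner locus of the piecewise-linear function $(a,b)\mapsto\min_i\{a_i+b_i\}$, with constant weight one on every facet (the Newton polytope being a standard simplex). Now pull back along $(\iota,\Id)$: using the explicit description $\iota(\mu)=\mu\bmod\pmb{1}$, represented by the sum-zero vector $\mu$, the defining function becomes $(\mu,b)\mapsto\min_i\{\mu_i+b_i\}=\bfu(\mu+b)$, which is exactly $((+)_{12})^{*}\bfu$. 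Since $\Div$ commutes with pullback (the graph construction, applied here both to the linear isomorphism $(\iota,\Id)$ and to the surjection $(+)_{12}$), and since $\delta_\bfH=\dd\bfu$, this gives
\[\bfI_\iota=(\iota,\Id)^{*}\bfI=\dd\,((+)_{12})^{*}\bfu=((+)_{12})^{*}\dd\bfu=((+)_{12})^{*}\bfH.\]

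\textbf{Second identity.} I would manipulate the right-hand side inside $M_{\bbR,1}\times M_{\bbR,2}\times M_{\bbR,3}$ in three moves. First, replace $\bfGamma_{(-)_{13}}$ by $\bfGamma_{(+)_{12}}$, using the coincidence of graphs recorded before the lemma (both cut out $\{a_1+a_2=a_3\}$). Second, since $\bfp_1=\bfp_1^{12}\circ\bfp_{12}$ one has $\bfp_1^{*}\bfX_\iota=\bfp_{12}^{*}\big((\bfp_1^{12})^{*}\bfX_\iota\big)$, so the projection formula for $F=\bfp_{12}$ pulls this factor out of the push-forward and leaves $(\bfp_1^{12})^{*}\bfX_\iota\cdot(\bfp_{12})_{*}\{\bfp_3^{*}\bfH\cdot\bfGamma_{(+)_{12}}\}$. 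Third, the graph construction for the pullback along $(+)_{12}$—with $\pi_1=\bfp_{12}$ the projection to the domain and $\pi_2=\bfp_3$ the projection to the target—identifies $(\bfp_{12})_{*}\{\bfp_3^{*}\bfH\cdot\bfGamma_{(+)_{12}}\}$ with $((+)_{12})^{*}\bfH$, which is $\bfI_\iota$ by the first identity. The result is $(\bfp_1^{12})^{*}\bfX_\iota\cdot\bfI_\iota$, the left-hand side.

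\textbf{Main obstacle.} The genuine content, and the place demanding care, is the first identity: one must keep the bookkeeping between the quotient $N_\bbR$ and the sum-zero subspace $M_\bbR$ straight, check that the min-convention tropicalization of $\sum_i x_i\xi_i$ is $\min_i\{a_i+b_i\}$ with the correct unit multiplicities, and verify that substituting the representative $\iota(\mu)=\mu$ does not move the corner locus (it only shifts the function by the $N_\bbR$-ambiguity). The second identity is formal, but I would still record the two facts that legitimize it: the graded-commutativity and associativity of the $\wedge$-product of $\delta$-forms, so the three factors may be reordered freely; and the properness of $\bfp_{12}$ on the relevant support, which holds because intersecting with $\bfGamma_{(+)_{12}}$ fixes the third coordinate as the sum of the first two, making $\bfp_{12}$ finite there.
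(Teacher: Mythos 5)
Your proposal is correct and follows essentially the same route as the paper's proof: tropicalizing the bilinear incidence equation to get $\min_i\{a_i+a_i'\}=((+)_{12})^*\bfu$ for the first identity, and combining the graph $\bfGamma_{(+)_{12}}=\bfGamma_{(-)_{13}}$ identification with the projection formula and the graph formula for pull-back for the second. You merely spell out the steps (factoring $\bfp_1=\bfp_1^{12}\circ\bfp_{12}$, properness of $\bfp_{12}$ on the relevant support) that the paper leaves implicit.
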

\begin{proof}
The equation for the incidence divisor $\bbI$ in homogeneous coordinates is $(\underline{x},\underline{\xi})\mapsto \braket{\underline{x},\underline{\xi}}$ on $\check{\bbR}^{n+1}\times \bbR^{n+1}$, so via the identification map, the tropicalization of this defining equation is the piecewise linear function $(\underline{a},\underline{a}')\mapsto \min_{i\in\Card{n}}\{a_i+a'_i\}$ on $\check{\bbR}^{n+1}\times \check{\bbR}^{n+1}$, hence $\bfI_{\iota}$ is the divisor of the function $(+)_{12}^*u$ restricted to $M_{\bbR}\times M_{\bbR}$. The second equality follows from the graph formula for pull-back and the projection formula for intersection of tropical cycles, together with the identification of graphs $\Gamma_{(+)_{12}}$ with $\Gamma_{(-)_{13}}$. 
\end{proof}

\subsection{Tropical Chow hypersurface and refined Chow polytope}~

Take the $(d+1)$-fold product $(M_{\bbR})^{d+1}$ and let $\bfDelta$ be the diagonal inclusion of $M_{\bbR}$ into it. By abuse of notation, as above, write $\iota$ the identification map with $(N_{\bbR})^{d+1}$; write $\bfp^T_S$ the projection maps for products of $(M_{\bbR})^{d+1}$ indexed by $S\subset T\subset\{1,2,3\}$ (and write just $\bfp_S$ if $T$ is the full set). Denote by $(+)_{\mathbf{12}}$ and by $(-)_{\mathbf{13}}$ the linear maps, by $\boxplus$ and $\boxminus$ the induced convolutions (resp.~Minkowski sum/difference) on $\delta$-forms (resp.~tropical cycles) on $(M_{\bbR})^{d+1}$. 

Denote by $\bfCh(\bfX)$ the tropicalization of (the restriction to $\check{\bbT}^{d+1}$) the Chow hypersurface $\Ch(X)$, it is a tropical hypersurface in $(M_{\bbR})^{d+1}$, write $\bfCh(\bfX)_{\iota}$ its identification in $(N_{\bbR})^{d+1}$. The pull-back $\bfDelta^*\bfCh(\bfX)$ is a tropical hypersurface in $M_{\bbR}$. We call both of them the tropical Chow hypersurface of the tropical cycle $\bfX$. 

\begin{proposition}\label{P: trop chow hypersurface}
The tropical Chow hypersurfaces are given by
\[\bfCh(\bfX)=(\bfp^{12}_2)_*\{(\bfp^{12}_1)^*\bfDelta_*\bfX_{\iota}\cdot \prod_{i=0}^d\bfI_{i,\iota}\}=\bfDelta_*(\bfX_{\iota})\boxminus\prod_{i=0}^d\bfH_i,\]
and
\[\bfDelta^*\bfCh(\bfX) =\bfDelta^*(\bfp^{12}_1)_*[(\bfp^{12}_1)^*\bfDelta_*\bfX_{\iota}\cdot \prod_{i=0}^d\bfI_{i,\iota}]=\bfX_{\iota}\boxminus \bfH^{\cdot(d+1)}.\]
\end{proposition}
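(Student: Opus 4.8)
The plan is to tropicalize the algebraic definition $\Ch(X)=q_*\{p^*\Delta_*X\cdot\prod_{i=0}^d\bbI_i\}$ operation by operation, and then to recast the resulting tropical integral transform as a Minkowski difference by iterating the preceding lemma. Throughout I work with the torus restrictions, so every object is read off through $\trop$ on $\check\bbT^{d+1}$ (resp.\ $\bbT^{d+1}$), and I rely on the three compatibilities recorded for the calculus of $\delta$-forms: tropicalization commutes with proper pushforward via the projection formula, with intersection against Cartier divisors via the stable tropical intersection, and with pullback via the graph construction.

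For the first equality in each display I would proceed as follows. The diagonal cycle $\Delta_*X$ tropicalizes to $\bfDelta_*\bfX_\iota$, and by the preceding lemma the $i$-th incidence divisor $\bbI_i$ tropicalizes to $\bfI_{i,\iota}$. Because each $\bbI_i$ is a Cartier divisor whose tropicalization $\bfI_{i,\iota}$ equals $\dd$ of a piecewise-linear function—the pullback of $\bfu$ under addition, by the preceding lemma—intersecting against it commutes with $\trop$; the projection formula then carries the proper pushforward $q_*$ through the product of these divisor intersections, yielding exactly $(\bfp^{12}_2)_*\{(\bfp^{12}_1)^*\bfDelta_*\bfX_\iota\cdot\prod_{i=0}^d\bfI_{i,\iota}\}$. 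The middle expression in the second display is obtained by applying $\bfDelta^*$ to this same integral transform, so the first equalities in both displays amount to the single assertion that $\trop$ intertwines the algebraic and tropical integral transforms.

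The second equality in each display is the convex-geometric reformulation. Substituting $\bfI_{i,\iota}=((+)_{\mathbf{12}})^*\bfH_i$ from the preceding lemma and using the identification $\bfGamma_{(+)_{\mathbf{12}}}=\bfGamma_{(-)_{\mathbf{13}}}$ of graphs, I apply the projection formula once for each coordinate $i=0,\dots,d$; this is the many-factor version of the single-factor identity $(\bfp_1^{12})^*\bfX_\iota\cdot\bfI_\iota=(\bfp_{12})_*\{\bfp_1^*\bfX_\iota\cdot\bfp_3^*\bfH\cdot\bfGamma_{(-)_{13}}\}$ proved in the lemma, and it turns the pushforward of the intersection into the convolution $\bfDelta_*(\bfX_\iota)\boxminus\prod_{i=0}^d\bfH_i$, giving the first display. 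For the second display I compute $\bfDelta^*$ of this convolution by base change along the diagonal rather than by naively distributing it: since $\bfDelta_*(\bfX_\iota)$ is supported on the diagonal, restricting the output of the subtraction $(-)_{\mathbf{13}}$ to the diagonal forces its second argument onto the diagonal as well, where the external product $\prod_{i=0}^d\bfH_i$ restricts to the self-intersection $\bfDelta^*\prod_{i=0}^d\bfH_i=\bfH^{\cdot(d+1)}$; the residual subtraction on $M_\bbR$ then produces $\bfX_\iota\boxminus\bfH^{\cdot(d+1)}$.

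The main obstacle is the bookkeeping of weights, not of supports. I must verify that the stable tropical intersection and pushforward multiplicities agree with the tropicalizations of the algebraic intersection product and image cycle, especially in the degenerate configurations where the incidence divisors—and, in the second display, the diagonal—meet $\bfX_\iota$ in excess dimension, so that the naive set-theoretic image overcounts and only the stable intersection recovers the correct balanced weights. The $\delta$-form formalism of \cite{Mih} controls the divisor intersections by writing each as $\dd$ of a piecewise-linear function, so the delicate points reduce to checking that every pushforward has generically finite fibres—so that the output is a genuine hypersurface of codimension one—and that restricting to the open tori before tropicalizing discards no component of the irreducible Chow hypersurface.
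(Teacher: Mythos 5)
Your tropical-side computation reproduces the paper's own proof: substituting $\bfI_{i,\iota}=((+)_{\mathbf{12}})^*\bfH_i$, identifying $\bfGamma_{(+)_{\mathbf{12}}}=\bfGamma_{(-)_{\mathbf{13}}}$, and applying the projection formula turns the integral transform into $\bfDelta_*(\bfX_{\iota})\boxminus\prod_{i=0}^d\bfH_i$, exactly as in the paper; and your base-change-along-the-diagonal argument for $\bfDelta^*\bigl(\bfDelta_*\bfX_{\iota}\boxminus\prod_{i=0}^d\bfH_i\bigr)=\bfX_{\iota}\boxminus\bfH^{\cdot(d+1)}$ correctly justifies a step the paper merely asserts, which is a genuine improvement in detail.

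The gap is in the first equality of the first display, $\bfCh(\bfX)=(\bfp^{12}_2)_*\{(\bfp^{12}_1)^*\bfDelta_*\bfX_{\iota}\cdot \prod_{i=0}^d\bfI_{i,\iota}\}$, where $\bfCh(\bfX)$ is by definition the tropicalization of the algebraic cycle $\Ch(X)$. You justify the key step by the principle that intersecting against a Cartier divisor commutes with $\trop$ because its tropicalization is $\dd$ of a piecewise-linear function. That principle is false: the tropicalization of a proper algebraic intersection need not equal the stable tropical intersection of the tropicalizations. For example, take $X=\{y=x+1\}\subseteq\bbG_m^2$ and $D=\{x=c\}$ with $\abs{c}=1$ and $c\equiv -1$ modulo the maximal ideal; then $\trop(X\cdot D)$ is the point $(0,-\log\abs{c+1})$ lying strictly inside a ray of $\trop(X)$, whereas the stable intersection $\trop(X)\cdot\trop(D)$ is the vertex. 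The failure happens precisely in the degenerate configurations you flag in your closing paragraph, and nothing in the $\delta$-form formalism rules them out here: the graph construction and projection formula are statements internal to the tropical world, whereas what you need is an algebraic--tropical comparison for this specific transform, with its specific multiplicities. That comparison is the genuinely nontrivial content of the proposition; the paper does not reprove it but imports it from Fink and Tripoli (building on Dickenstein--Feichtner--Sturmfels), as its remark following the proof makes explicit, and its displayed computation only establishes the second equality in each display. So your proposal should either cite those results for the first equality, as the paper does, or supply an actual transversality/multiplicity argument tailored to the incidence correspondence; as written, it acknowledges the obstacle but leaves it unresolved, and the general principle invoked to pass it is untrue.
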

\begin{proof}
First calculate the incidence intersection using graph formula for pull-back:

\begin{equation*}
    \begin{split}
        (\bfp^{12}_1)^*\bfDelta_*\bfX_{\iota} \cdot \prod_{i=0}^d\bfI_{i,\iota}&=(\bfp^{12}_1)^*\bfDelta_*\bfX_{\iota} \cdot (+)_{\mathbf{12}}^*\prod_{i=1}^d \bfH_i\\
        &=(\bfp^{12}_1)^*\bfDelta_*\bfX_{\iota} \cdot (\bfp_{12})_*\{(\bfp_3)^* \prod_{i=1}^d \bfH_i\cdot\mathbf{\Gamma}_{(+)_{\mathbf{12}}}\}\\
        &=(\bfp_{12})_*\{(\bfp_{12})^*(\bfp^{12}_1)^*\bfDelta_*\bfX_{\iota}\cdot (\bfp_3)^* \prod_{i=1}^d \bfH_i\cdot\mathbf{\Gamma}_{(+)_{\mathbf{12}}}\}\\
        &=(\bfp_{12})_*\{(\bfp_1)^*\bfDelta_*\bfX_{\iota}\cdot (\bfp_3)^* \prod_{i=1}^d \bfH_i\cdot\mathbf{\Gamma}_{(-)_{\mathbf{13}}}\}
    \end{split}
\end{equation*}

Then calculate the correspondence transform using projection formula for push-forward:
\begin{equation*}
    \begin{split}
        \bfCh(\bfX)&=(\bfp^{12}_2)_*\{(\bfp^{12}_1)^*\bfDelta_*\bfX_{\iota}\cdot \prod_{i=0}^d\bfI_{i,\iota}\}\\
        &=(\bfp^{12}_2)_*\{(\bfp_{12})_*((\bfp_1)^*\bfDelta_*\bfX_{\iota}\cdot (\bfp_3)^* \prod_{i=0}^d \bfH_i\cdot\mathbf{\Gamma}_{(-)_{\mathbf{13}}})\}\\
        &=(\bfp_2)_*\{(\bfp_1)^*\bfDelta_*\bfX_{\iota}\cdot (\bfp_3)^* \prod_{i=0}^d \bfH_i\cdot\mathbf{\Gamma}_{(-)_{\mathbf{13}}}\}\\
        &=\bfDelta_*\bfX_{\iota}\boxminus\prod_{i=0}^d \bfH_i,
    \end{split}
\end{equation*}
and
\begin{equation*}
    \begin{split}
        \bfDelta^*\bfCh(\bfX)&=\bfDelta^*(\bfDelta_*\bfX_{\iota}\boxminus\prod_{i=0}^d \bfH_i)=\bfX_{\iota}\boxminus \bfH^{\cdot(d+1)}.
    \end{split}
\end{equation*}

\end{proof}
\begin{remark}
These two expressions for the tropical Chow hypersurfaces are firstly obtained in \cite{Fin} and in \cite{Tri} based on results in \cite{DFS}; we recast it into modern language of tropical intersection theory with $\delta$-form calculus.
\end{remark}
By construction, the Chow divisors are given by (unique up to an additive constant) piecewise linear function $\bfR_{\bfX}$ (resp.~$\bfDelta^*\bfR_{\bfX}$) on $M_{\bbR}^{d+1}$ (resp.~on $M_{\bbR}$), call them \emph{tropical Chow forms}.

\begin{corollary}\label{C: trop Chow forms}
The tropical Chow forms are given by convolution of $\delta$-forms
\[\bfR_{\bfX}=\delta_{\bfDelta_*\bfX_{\iota}} \boxminus (\frac{1}{d+1}\bfu_i\sum_{i=0}^d\bigwedge_{j\neq i}\delta_{\bfH_j}),\quad \bfDelta^*\bfR_{\bfX}=\delta_{\bfX_{\iota}}\boxminus \bfu \delta_{\bfH^{\cdot d}}.\]
\end{corollary}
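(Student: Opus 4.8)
The plan is to recognize both displayed identities as statements that certain explicit convolutions of $\delta$-forms are $\dd$-primitives (tropical potentials) of the tropical Chow cycles computed in Proposition \ref{P: trop chow hypersurface}. Recall that the tropical Chow forms are the piecewise linear functions characterized, up to an additive constant, by $\dd[\bfR_{\bfX}]=\delta_{\bfCh(\bfX)}$ on $(M_{\bbR})^{d+1}$ and $\dd[\bfDelta^*\bfR_{\bfX}]=\delta_{\bfDelta^*\bfCh(\bfX)}$ on $M_{\bbR}$. Hence it suffices to apply $\dd$ to each proposed convolution and check that the result is the $\delta$-form of the corresponding tropical Chow hypersurface; the remaining affine-linear indeterminacy is then pinned down by the common multidegree, leaving exactly the additive constant asserted.

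I would first treat the second (pulled-back) identity, which is the cleaner model computation. The three ingredients are: $\dd$ commutes with the convolution $\boxminus$, since the latter is assembled from the pullbacks $\bfp_i^*$, the $\wedge$-product, and the proper pushforward $(-)_*$, each compatible with $d'$ and $d''$ on $\delta$-currents (and the forms involved have even total degree, so Koszul signs are trivial); the $\delta$-form of any tropical cycle is $\dd$-closed, the balancing condition being precisely closedness, so in particular $\delta_{\bfX_{\iota}}$ and $\delta_{\bfH^{\cdot d}}=\delta_{\bfH}^{\wedge d}$ are closed; and $\dd\bfu=\delta_{\bfH}$. A Leibniz computation against the closed form $\delta_{\bfH^{\cdot d}}$ then gives $\dd(\bfu\,\delta_{\bfH^{\cdot d}})=\delta_{\bfH}\wedge\delta_{\bfH^{\cdot d}}=\delta_{\bfH^{\cdot(d+1)}}$, with no residual signs precisely because $\delta_{\bfH^{\cdot d}}$ is annihilated by both $d'$ and $d''$. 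Combining, one finds
\[\dd\bigl(\delta_{\bfX_{\iota}}\boxminus\bfu\,\delta_{\bfH^{\cdot d}}\bigr)=\delta_{\bfX_{\iota}}\boxminus\dd(\bfu\,\delta_{\bfH^{\cdot d}})=\delta_{\bfX_{\iota}}\boxminus\delta_{\bfH^{\cdot(d+1)}}=\delta_{\bfX_{\iota}\boxminus\bfH^{\cdot(d+1)}}=\delta_{\bfDelta^*\bfCh(\bfX)},\]
the last two equalities using that $\boxminus$ of cycle $\delta$-forms computes the stable Minkowski difference, together with Proposition \ref{P: trop chow hypersurface}.

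The first identity follows the same pattern on $(M_{\bbR})^{d+1}$, with the averaging present only to produce the symmetric representative. For each fixed $i$ the function $\bfu_i$ depends only on the $i$-th factor, so $\dd\bfu_i=\delta_{\bfH_i}$ while all cross-factor derivatives vanish; against the closed form $\bigwedge_{j\neq i}\delta_{\bfH_j}$ this yields $\dd\bigl(\bfu_i\bigwedge_{j\neq i}\delta_{\bfH_j}\bigr)=\bigwedge_{i=0}^d\delta_{\bfH_i}=\delta_{\prod_{i}\bfH_i}$, independently of $i$. Summing with weights $1/(d+1)$ therefore returns $\delta_{\prod_{i}\bfH_i}$, and convolving with the closed form $\delta_{\bfDelta_*\bfX_{\iota}}$ gives $\delta_{\bfDelta_*\bfX_{\iota}\boxminus\prod_{i}\bfH_i}=\delta_{\bfCh(\bfX)}$ by Proposition \ref{P: trop chow hypersurface}. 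The symmetrization over $i$ is not needed to match the divisor, since any single term already does, but it selects the permutation-symmetric potential compatible with the symmetry of the Chow form in its $d+1$ groups of variables.

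The step I expect to require the most care is the rigorous commutation of $\dd$ with the convolution operators $\boxminus$, together with the Leibniz identities for the second-order operator $\dd=d'd''$ acting on non-smooth $\delta$-forms, including the check that multiplying the piecewise linear potential $\bfu$ against cycle $\delta$-forms stays inside the calculus of \cite{Mih}\cite{GK}; since every object here is a constant-weighted polyhedral current or a piecewise linear function coming from an integral model, these formal manipulations remain legitimate. A secondary point is resolving the affine-linear indeterminacy of a piecewise linear function with prescribed divisor: matching the multidegree, equivalently the recession behaviour, of the convolution with that of the genuine tropicalized Chow form removes the linear part and reduces the ambiguity to the additive constant already built into the non-uniqueness of $\bfR_{\bfX}$.
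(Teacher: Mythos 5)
Your proof is correct and takes essentially the same approach as the paper: both arguments verify that the proposed convolution is a Green function ($\dd$-potential) for the tropical Chow divisor computed in Proposition \ref{P: trop chow hypersurface}, using the closedness of cycle $\delta$-forms, the Leibniz rule together with $\dd\bfu_i=\delta_{\bfH_i}$, and the compatibility of $\dd$ with push-forward. The only differences are organizational — the paper applies $\dd$ upstairs to the graph expression and then pushes forward, while you commute $\dd$ through the convolution $\boxminus$ and do the Leibniz computation downstairs — plus your explicit handling of the affine-linear indeterminacy via recession behaviour, a point the paper's proof leaves implicit.
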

\begin{proof}
First note that the incidence intersection has a Green $\delta$-form since
\begin{equation*}
    \begin{split}
        &\dd \{(\bfp_1)^*\delta_{\bfDelta_*\bfX_{\iota}}\wedge (\bfp_3)^*(\frac{1}{d+1}\bfu_i\sum_{i=0}^d\bigwedge_{j\neq i}\delta_{\bfH_j})\wedge \delta_{ \bfGamma_{(-)_{\mathbf{13}}}}\}\\
        =&\delta_{(\bfp_1)^*\bfDelta_*\bfX_{\iota}}\wedge \delta_{(\bfp_3)^*\prod_{i=0}^d \bfH_i} \wedge \delta_{ \bfGamma_{(-)_{\mathbf{13}}}}
        =\delta_{(\bfp_1)^*\bfDelta_*\bfX_{\iota}\cdot (\bfp_3)^*\prod_{i=0}^d \bfH_i \cdot \bfGamma_{(-)_{\mathbf{13}}} }.
    \end{split}
\end{equation*}
Thus the push forwards are Green $\delta$-functions for tropical Chow divisors, hence they give tropical Chow forms (up to an additive constant):
\begin{equation*}
    \begin{split}
        \bfR_{\bfX}&=(\bfp_2)_*\{(\bfp_1)^*\delta_{\bfDelta_*\bfX_{\iota}}\wedge (\bfp_3)^*(\frac{1}{d+1}\bfu_i\sum_{i=0}^d\bigwedge_{j\neq i}\delta_{\bfH_j}) \wedge \delta_{ \bfGamma_{(-)_{\mathbf{13}}}}\}\\
        &=\delta_{\bfDelta_*\bfX_{\iota}} \boxminus (\frac{1}{d+1}\bfu_i\sum_{i=0}^d\bigwedge_{j\neq i}\delta_{\bfH_j})
    \end{split}
\end{equation*}

and
\begin{equation*}
    \begin{split}
        \bfDelta^*\bfR_{\bfX}&=\bfDelta^*\{\delta_{\bfDelta_*\bfX_{\iota}} \boxminus (\frac{1}{d+1}\bfu_i\sum_{i=0}^d\bigwedge_{j\neq i}\delta_{\bfH_j})\}=\delta_{\bfX_{\iota}}\boxminus \bfu \delta_{\bfH^{\cdot d}}
    \end{split}
\end{equation*}
\end{proof}

\subsection{Subdifferential and Legendre-Fenchel transform}(see \cite[Chap.~2]{BPS})~

Let $\bff$ be a \emph{concave} function on $M_{\bbR}$, its \emph{Legendre-Fenchel dual}
$\bff^{\vee}$ is the concave function on $N_{\bbR}$ defined by $b\mapsto \sup_{a\in M_{\bbR}}(\braket{b,a}-\bff(a))$. 
The sup-differential of a concave function $\bff$ on $M_{\bbR}$ at a point $a$ is the subset $\partial \bff(a)$ of $N_{\bbR}$ consisting of points $b$ such that $\braket{b,(a'-a)}\geq \bff(a')-\bff(a)$ holds for all $a'\in M_{\bbR}$. For any $\tau\in M_{\bbR}$, denote by $\tau:M_{\bbR}\to M_{\bbR}$ the translation $a\mapsto a-\tau$, and by $\partial_{\tau}\bff(a)\in N_{\bbR}$ the partial derivative of $\bff$ at $a$ in direction $\tau$; namely the element determined by $\bff(a+\tau)-\bff(a)=\braket{\partial_{\tau}\bff(a),\tau}+o(\abs{\tau})$. The sup-differential $\partial \bff$ induces set-valued maps from $M_{\bbR}$ to $N_{\bbR}$.

Recall that $\bfu$ is the piecewise linear function on $M_{\bbR}$ induced by $c\mapsto\min_{i\in\Card{n}}\{c_i\}$ on $\check{\bbR}^{n+1}$, write $I(a)$ the subset of $\Card{n}$ for indices such that $a_i$ is minimal, namely $\bfu(a)=a_i$. For any $\tau\in M_{\bbR}$, write $\bfu_{\tau}$ the translated function $a\mapsto\bfu(a-\tau)$. A standard calculation gives $(\dd \bfu_{\tau})^{\wedge n}=\delta_{\tau}$. In general, for a $\delta$-form $B$ on $M_{\bbR}$, write $\partial_{\tau}B$ its partial derivative in direction $\tau$.


\begin{lemma}\label{L: standard pwl function}
The set-valued map $\partial\bfu$ can be computed as $a\mapsto \triangle_{I(a)}$; in particular, $\bfu^{\vee}$ is equal to the indicator function on the standard simplex $\triangle_{\Card{n}}$. 
\end{lemma}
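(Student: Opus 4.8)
The plan is to recognise $\bfu$ as the pointwise minimum of the $n+1$ coordinate functionals and to read off both the sup-differential and the conjugate from this presentation. Concretely, I would write $\bfu(a)=\min_{i\in\Card{n}}\braket{\bfe_i,a}$ for $a\in M_{\bbR}$, where $\braket{\bfe_i,a}=a_i$ is the restriction to $M_{\bbR}=\pmb{1}^{\perp}$ of the $i$-th coordinate functional, so that the gradient of the $i$-th affine piece is the image of $\bfe_i$ in $N_{\bbR}=\bbR^{n+1}/\pmb{1}$, which is exactly the $i$-th vertex of $\triangle_{\Card{n}}$. Since two coordinate functionals agree on $\pmb{1}^{\perp}$ only up to a multiple of $\pmb{1}$, these images remain distinct in $N_{\bbR}$, and $\triangle_{I(a)}$ is precisely the convex hull of the gradients of the affine pieces active at $a$. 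The first assertion is then the standard fact that the sup-differential of a minimum of affine forms is the convex hull of the gradients of the active pieces; I would record the two inclusions explicitly.

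For $\triangle_{I(a)}\subseteq\partial\bfu(a)$ I would take a convex combination $b=\sum_{i\in I(a)}t_i\bfe_i$ and verify the defining inequality directly: $\braket{b,a}=\bfu(a)$ because $a_i=\bfu(a)$ for every active $i$ and $\sum_i t_i=1$, while $\braket{b,a'}=\sum_{i\in I(a)}t_ia'_i\geq\min_j a'_j=\bfu(a')$ for any $a'$, whence $\braket{b,a'-a}\geq\bfu(a')-\bfu(a)$. For the reverse inclusion I would use that a supergradient $b$ dominates the one-sided directional derivative in every direction $w\in M_{\bbR}$, which here equals $\min_{i\in I(a)}w_i$ because the inactive pieces stay inactive for small displacements; this gives $\braket{b,w}\geq\min_{i\in I(a)}\braket{\bfe_i,w}=\min_{t\in\triangle_{I(a)}}\braket{t,w}$ for all $w$. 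A strict separating hyperplane argument in the pairing between $N_{\bbR}$ and $M_{\bbR}$ then forces $b\in\triangle_{I(a)}$, since otherwise some $w$ would strictly separate $b$ from the compact convex set $\triangle_{I(a)}$, contradicting the inequality just obtained.

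The second assertion I would deduce from the first specialised to the origin, using the positive homogeneity of $\bfu$: since $\bfu(\lambda a)=\lambda\bfu(a)$ for $\lambda>0$, for each $b$ the quantity $\braket{b,a}-\bfu(a)$ is positively homogeneous in $a$, so the extremum defining $\bfu^{\vee}(b)$ is $0$ when $\braket{b,a}\geq\bfu(a)$ for all $a\in M_{\bbR}$ and is unbounded otherwise. As $\bfu(\pmb{0})=0$, this finiteness condition reads exactly $b\in\partial\bfu(\pmb{0})$, so $\bfu^{\vee}$ is the concave indicator of $\partial\bfu(\pmb{0})$. The first part then identifies $\partial\bfu(\pmb{0})=\triangle_{I(\pmb{0})}=\triangle_{\Card{n}}$, every coordinate being minimal at the origin, which is the claim.

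The only genuinely delicate point is the bookkeeping across the duality $M_{\bbR}\leftrightarrow N_{\bbR}$ and the quotient $\bbR^{n+1}/\pmb{1}$: one must consistently realise the gradients $\bfe_i$ as vertices in $N_{\bbR}$, restrict the coordinate functionals to $\pmb{1}^{\perp}$, and run the separation argument in the correct space. Once these identifications are fixed, both statements reduce to elementary convex analysis of a minimum of linear forms, so I expect no further obstacle.
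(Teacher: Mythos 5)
Your proof is correct and is exactly the ``standard calculation'' that the paper invokes without giving details: the two inclusions for $\partial\bfu(a)=\triangle_{I(a)}$ via active pieces plus separation, and the homogeneity argument identifying $\bfu^{\vee}$ with the indicator of $\partial\bfu(\pmb{0})=\triangle_{\Card{n}}$. One small remark: your finiteness criterion $\braket{b,a}\geq\bfu(a)$ presupposes the concave (infimum) convention for the Legendre--Fenchel dual as in the cited reference \cite{BPS}, which is indeed the intended one, the ``$\sup$'' in the paper's definition being a slip.
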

\begin{proof}
Standard calculation.
\end{proof}

\begin{lemma}\label{L: corner locus indicator set}
With an orthonomal basis $\underline{e}$, for any $\underline{z}\in\bbT^{\an}$, the indicator sets satisfy $I(\trop_{\underline{e}}(\underline{z}))=I_{\underline{e}}(\underline{z})$.
\end{lemma}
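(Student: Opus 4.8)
The plan is to unwind both sides to the coordinates of an affine lift and observe that the tropicalization convention converts ``coordinate of maximal absolute value'' into ``coordinate of minimal tropical value''. First I would fix an affine representative $\hat{z}=\sum_{i=0}^n a_i e_i^{\vee}\in E^{\vee}_{K'}$ of $\underline{z}$. By the definition of $\trop_{\underline{e}}$ recalled above, the point $\trop_{\underline{e}}(\underline{z})\in N_{\bbR}$ is the class of $(-\log\abs{a_0},\dots,-\log\abs{a_n})$ modulo $\pmb{1}\bbR$. On the other hand, by the definition of $I$ through the function $\bfu$, the set $I(\trop_{\underline{e}}(\underline{z}))$ consists of exactly those indices $i$ at which the coordinate $-\log\abs{a_i}$ is minimal.

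The heart of the matter is then an elementary monotonicity: since $t\mapsto -\log t$ is strictly decreasing on $\bbR_{>0}$, the index set minimizing $-\log\abs{a_i}$ coincides with the index set maximizing $\abs{a_i}$. Because $\underline{e}^{\vee}$ is orthonormal for the dual norm $\norm{\ndot}^{\vee}$, the Cartesian property gives $\norm{\hat{z}}^{\vee}=\max_i\abs{a_i}$, so the maximizing set is precisely $\{i:\abs{a_i}=\norm{\hat{z}}^{\vee}\}=I_{\underline{e}}(\underline{z})$, by the very definition of $I_{\underline{e}}$. This identifies the two sides on the level of a fixed lift.

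The only genuine care needed is the compatibility of the two descent procedures, which I would handle last. The subset $I(a)$ is unchanged when $a$ is replaced by $a+t\pmb{1}$, since adding a constant to every coordinate does not alter which coordinates are minimal; hence $I$ is well-defined on $N_{\bbR}=\bbR^{n+1}/\pmb{1}\bbR$. Symmetrically, rescaling the lift $\hat{z}\mapsto c\hat{z}$ multiplies every $a_i$ by the same $c$ and so leaves $I_{\underline{e}}(\underline{z})$ unchanged, which is precisely the statement that $I_{\underline{e}}$ descends from $E^{\vee}_{K'}\setminus\{0\}$ to $\bbP(E)^{\an}$. Under $\trop_{\underline{e}}$ this rescaling corresponds exactly to translation by $\pmb{1}\bbR$, so the two descents match and the displayed equality is independent of all choices. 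I do not expect any substantial obstacle here: the entire content is the sign reversal built into the tropicalization convention, and the argument is insensitive to the valued extension $K'/K$, since absolute values and the orthogonality formula for a strict Cartesian norm are preserved under base change.
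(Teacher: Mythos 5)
Your proposal is correct and follows essentially the same route as the paper's proof: unwind the definitions over an affine lift, use orthonormality to identify $\norm{\hat{z}}^{\vee}=\max_i\abs{a_i}$, and note that maximizing $\abs{a_i}$ is the same as minimizing $-\log\abs{a_i}$. Your extra verification that both indicator sets descend compatibly under rescaling and translation by $\pmb{1}\bbR$ is a detail the paper leaves implicit, and is a welcome addition rather than a deviation.
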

\begin{proof}
Write $\underline{z}$ as corresponding to $\hat{z}\in E^{\vee}_{K'}$ with $K'/K$ suitable valued field extension. By orthonormality the indices $i\in\Card{n}$ belonging to $I_{\underline{e}}(\underline{z})$ are those such that $\abs{z_i}$ is maximal. These are indices that $-\log~\abs{z_i}$ is minimal, hence coincides with the subset $I(\trop_{\underline{e}}(\underline{z}))$.
\end{proof}

\begin{lemma}
For the $\delta$-form $\bfu\wedge\delta_{\bfH^{\cdot (d)}}$, it holds that
\[\partial_{\tau}(\bfu\wedge\delta_{\bfH^{\cdot (d)}})=(d+1)(\partial_{\tau}\bfu)\wedge\delta_{\bfH^{\cdot (d)}}.\]
\end{lemma}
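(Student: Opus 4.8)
The plan is to reduce the statement to the representation $\delta_{\bfH^{\cdot d}}=(\dd\bfu)^{\wedge d}$, which is available because $\bfH=\dd\bfu$ and the $\wedge$-product of $\delta$-forms extends the stable self-intersection of tropical cycles. The object under study is then $\bfu\wedge(\dd\bfu)^{\wedge d}$, a $(d,d)$ $\delta$-form supported on the codimension-$d$ skeleton whose maximal cones $\sigma_S$ are indexed by the subsets $S\subseteq\Card{n}$ with $\abs{S}=d+1$; on the relative interior of $\sigma_S$ one has $I(a)=S$, $\bfu(a)=a_i$ for $i\in S$, and by Lemma \ref{L: standard pwl function} the sup-differential is the $d$-dimensional simplex $\triangle_S$. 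Both sides of the asserted identity are $(d,d)$ $\delta$-forms supported on $\bfH^{\cdot d}$, so the strategy is to prove equality by matching the coefficient of $\delta_{[\sigma_S,1]}$ on each such top cone, which turns the global statement into a local computation transverse to $\sigma_S$.

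To organize that local computation I would first record that $\partial_\tau$ is a derivation for $\wedge$ and commutes with the tropical Dolbeault operators, so that $\partial_\tau(\dd\bfu)=\dd(\partial_\tau\bfu)$; collecting by graded-commutativity the $d$ identical terms produced by the factors $\dd\bfu$ gives
\[\partial_\tau\bigl(\bfu\wedge(\dd\bfu)^{\wedge d}\bigr)=(\partial_\tau\bfu)\wedge(\dd\bfu)^{\wedge d}+d\,\bfu\wedge\dd(\partial_\tau\bfu)\wedge(\dd\bfu)^{\wedge(d-1)}.\]
The entire content is thus the swap identity $\bfu\wedge\dd(\partial_\tau\bfu)\wedge(\dd\bfu)^{\wedge(d-1)}=(\partial_\tau\bfu)\wedge(\dd\bfu)^{\wedge d}$, which promotes the naive factor $1$ to the sheet-count $d+1$. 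I would prove this on each $\sigma_S$ by noting that the germ of $\bfu$ transverse to $\sigma_S$ is the standard corner $\min_{i\in S}a_i$, i.e. the pullback of the tropical-$\bbP^{d}$ Green function under the transverse projection, while $(\dd\bfu)^{\wedge d}$ restricts to the transverse top Monge–Ampère form, which by the computation $(\dd\bfu_\tau)^{\wedge\bullet}=\delta_\tau$ is the point mass at the transverse origin. Since $\partial_\tau\bfu$ equals the directional derivative $\min_{i\in S}\tau_i=\min_{b\in\triangle_S}\braket{b,\tau}$ there (locally constant on the chambers, its differential supported on the corner locus of $\bfu$), the coefficient of $\delta_{[\sigma_S,1]}$ on the right-hand side is exactly $\min_{i\in S}\tau_i$, and the local model reduces the left-hand coefficient to the same quantity.

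I expect the main obstacle to be making precise the $\delta$-form product of the singular Green function $\bfu$ with powers of its own curvature $\dd\bfu$ \emph{exactly along its corner locus}: the naive evaluation $\bfu\cdot(\dd\bfu)^{\wedge d}$ fails because the coefficient $\bfu$ is non-smooth precisely on the support, so one must work inside Mihatsch's characterization of $\delta$-forms \cite{Mih} and track the transverse Lagerberg components. It is in this bookkeeping that the multiplicity $d+1$—the number of linear sheets of $\bfu$, equivalently $1+\dim\triangle_S$, meeting along $\sigma_S$—is produced, and the delicate points are to verify that the derivation and commutation properties of $\partial_\tau$ genuinely hold in this calculus and that the cross-term $d''\bfu\wedge d'(\partial_\tau\bfu)-d''(\partial_\tau\bfu)\wedge d'\bfu$, which is supported on the corner locus, contributes only through the transverse point mass and reproduces one further copy of $(\partial_\tau\bfu)\wedge(\dd\bfu)^{\wedge d}$ rather than spurious terms.
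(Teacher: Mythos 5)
Your heuristic is the right one: the count $d+1=1+d$ (one term from differentiating the coefficient $\bfu$, $d$ terms from the factors $\dd\bfu$, each cross term converted into a copy of the main term) is exactly the structure of the paper's argument, and your identification of $\partial_\tau\bfu$ with $\min_{i\in S}\tau_i$ on a cone $\sigma_S$ matches the way the lemma is used later. But the two pillars you rest the plan on are precisely the unproven crux, and the paper's proof is engineered to avoid both. You begin by ``recording'' that $\partial_\tau$ is a derivation for $\wedge$ commuting with $\dd$; no such fact is available for products of $\delta$-forms whose singular loci coincide, and establishing it in this situation is essentially equivalent to the lemma itself. The paper never differentiates termwise: it applies the finite translation $\tau^*$, which is an honest ring homomorphism on $\delta$-forms, so that
\[
\bfu_\tau(\dd\bfu_\tau)^{\wedge d}=(\bfu_\tau-\bfu)(\dd\bfu_\tau)^{\wedge d}+\bfu\,\dd(\bfu_\tau-\bfu)\wedge\sum_{l=0}^{d-1}(\dd\bfu_\tau)^{\wedge(d-1-l)}\wedge(\dd\bfu)^{\wedge l}+\bfu(\dd\bfu)^{\wedge d}
\]
is exact algebra at finite $\tau$; it then swaps $\bfu\,\dd(\bfu_\tau-\bfu)\wedge(\cdots)=(\bfu_\tau-\bfu)\,\dd\bfu\wedge(\cdots)$ by integration by parts, and passes to the limit only at the very end, via continuity of $\wedge$, where every term is visibly a function times a polyhedral current. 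Note that the paper's swap involves two \emph{continuous} piecewise linear functions, $\bfu$ and $\bfu_\tau-\bfu$; your infinitesimal version replaces $\bfu_\tau-\bfu$ by the discontinuous $\partial_\tau\bfu$, so the objects $\dd(\partial_\tau\bfu)$ and $(\partial_\tau\bfu)\wedge\delta_{\bfH^{\cdot(d)}}$ require multiplying currents by a function that jumps exactly on their support. Their meaning then depends on a convention for the value of $\partial_\tau\bfu$ on the corner locus: you silently take the one-sided value $\min_{i\in S}\tau_i$, which is the ``correct'' one, but nothing in your argument forces it, and an averaged convention would produce a different answer.

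The second gap is the reduction of the swap identity to matching coefficients of $\delta_{[\sigma_S,1]}$ on top cones. A current supported on the codimension-$d$ skeleton need not be a weighted polyhedral current on it: it can contain transverse-derivative (``dipole'') components, which pair nontrivially with test forms yet have zero coefficient against every $\delta_{[\sigma_S,1]}$. A priori $\partial_\tau(\bfu\wedge\delta_{\bfH^{\cdot(d)}})$, being a limit of difference quotients of currents with \emph{moving} supports, is exactly the kind of object that could carry such components; ruling them out is part of the content of the lemma, not a consequence of support considerations. In the paper this is achieved by the integration-by-parts step, which rewrites the dangerous term $\bfu\,\dd(\bfu_\tau-\bfu)\wedge(\cdots)$ so that the limit is manifestly a bounded function times $\delta_{\bfH^{\cdot(d)}}$. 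You do flag both issues as ``delicate points to verify,'' but the verification needed is not local bookkeeping transverse to $\sigma_S$ --- it is the finite-difference argument itself. In short: right multiplicity, right local picture, but the proposal defers exactly the two steps that constitute the proof.
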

\begin{proof}
By the continuity of wedge product for $\delta$-forms and the integration by part formula, one has
\begin{equation*}
    \begin{split}
        &\tau^*(\bfu(\dd \bfu )^{\wedge d})=\bfu_{\tau}(\dd \bfu_{\tau})^{\wedge d}\\
        &=(\bfu_{\tau}-\bfu)(\dd \bfu_{\tau})^{\wedge d}+\bfu\dd(\bfu_{\tau}-\bfu)\wedge\sum_{l=0}^{d-1}(\dd\bfu_{\tau})^{\wedge d-1-l}\wedge (\dd\bfu)^{l}\\
        &=(\bfu_{\tau}-\bfu)(\dd \bfu_{\tau})^{\wedge d}+(\bfu_{\tau}-\bfu)\dd \bfu\wedge\sum_{l=0}^{d-1}(\dd\bfu_{\tau})^{\wedge d-1-l}\wedge (\dd\bfu)^{l}\\
        &=\tau (d+1)\bfu(\dd\bfu)^{\wedge d}+o(|\tau|),
    \end{split}
\end{equation*}
hence the desired identity holds as $\dd \bfu=\delta_{\bfH}$.
\end{proof}

\begin{proposition}\label{convolution derivative}
Let $A$ and $B$ be $\delta$-forms of bidegree $(l,l)$ and $(n-l,n-l)$ on $M_{\bbR}$ and $\bff$ be the piecewise linear function on $M_{\bbR}$ defined by $\bff=A\boxminus B$. It holds
\[\partial_{\tau}\bff=A\boxminus (-\partial_{\tau} B),\quad \bff(\pmb{0})=\int_{M_{\bbR}}A\wedge B \]
\end{proposition}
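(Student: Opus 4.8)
The plan is to realise the piecewise linear function $\bff=A\boxminus B$ as a push-forward along the subtraction map, so that both assertions can be read off from an explicit parametrisation of its fibres: the value at the origin is an integral over the diagonal, and the directional derivative arises by differentiating under the integral.

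First I would unwind the definition of the convolution. Writing $(-)\colon M_{\bbR}\times M_{\bbR}\to M_{\bbR}$ for the subtraction $(a,c)\mapsto a-c$ and $\bfp_1,\bfp_2$ for the two projections, we have $A\boxminus B=(-)_*(\bfp_1^*A\wedge\bfp_2^*B)$. Since $\bfp_1^*A\wedge\bfp_2^*B$ has bidegree $(n,n)$ and the fibres of $(-)$ are $n$-dimensional, the push-forward lands in bidegree $(0,0)$, i.e.\ is the function $\bff$, consistent with the hypothesis. Parametrising the fibre of $(-)$ over $\tau$ by $a\mapsto(a,a-\tau)$ yields the correlation formula $\bff(\tau)=\int_{M_{\bbR}}A\wedge\tau^*B$, where $\tau^*$ is the pullback by the translation $a\mapsto a-\tau$ already fixed in the text. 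At $\tau=\pmb 0$ the fibre is the diagonal $\bfD$, along which $\bfp_1$ and $\bfp_2$ agree, so the integrand restricts to $A\wedge B$; equivalently, pairing $\bff$ with the point mass $\delta_{\pmb 0}$ and using the identity $(-)^*\delta_{\pmb 0}=\delta_{\bfD}$ together with the projection formula gives $\bff(\pmb 0)=\int_{M_{\bbR}}A\wedge B$, which is the second identity.

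For the derivative, I would differentiate the correlation formula in the direction $\tau$, commuting the directional derivative past the push-forward. Since the convolution variable enters $\bff(\tau)=\int_{M_{\bbR}}A\wedge\tau^*B$ only through the translated factor $\tau^*B$, differentiating in the direction $\tau$ brings down, by the chain rule for the translation $a\mapsto a-\tau$, a minus sign together with $\partial_\tau B$; this gives $\partial_\tau\bff=\int_{M_{\bbR}}A\wedge\tau^*(-\partial_\tau B)=A\boxminus(-\partial_\tau B)$, the first identity. The same minus sign can be obtained by instead placing the dependence on $A$, computing $(\partial_\tau A)\boxminus B$, and transferring the derivative onto $B$ by the integration-by-parts formula for $\delta$-forms, the sign then coming from Stokes rather than from the chain rule.

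The step I expect to be the main obstacle is justifying the interchange of $\partial_\tau$ with the push-forward $(-)_*$ inside the $\delta$-form calculus, that is, differentiation under the integral sign for currents that need not be smooth. This should follow from the continuity of the $\wedge$-product and of push-forward on $\delta$-forms and from the characterisation of $\delta$-forms by the polyhedrality of $[B]$, $d'[B]$ and $d''[B]$ recalled at the start of the section. The remaining care is in the bookkeeping of bidegrees and of the super-form signs in the graded-commutative product, and in checking that the sign produced is $-1$ for the correlation structure $\boxminus$, rather than $+1$ as it would be for $\boxplus$.
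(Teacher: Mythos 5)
Your proposal is correct and follows essentially the same route as the paper: your ``correlation formula'' $\bff(\tau)=\int_{M_{\bbR}}A\wedge\tau^*B$ is just the pointwise form of the paper's translation identity $\tau^*(A\boxminus B)=A\boxminus(-\tau)^*B$, which you then differentiate with the same chain-rule sign, and for the value at the origin you invoke exactly the paper's combination of the projection formula with $(-)^*\delta_{\pmb{0}}=\delta_{\bfD}$. The only difference is that you flag and justify the interchange of $\partial_\tau$ with the push-forward, a point the paper's two-line proof leaves implicit.
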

\begin{proof}
This follows from the property of convolution
\[\tau^*\bff=\tau^*(A\boxminus B)=A\boxminus (-\tau)^*B,\]
and the projection formula combined with $(-)^*\delta_{\pmb{0}}=\delta_\bfD$.
\end{proof}

\begin{corollary}\label{C: trop Chow form LF dual}
The following equalities hold
\[\partial_{\tau} \bfDelta^*\bfR_{\bfX}=\delta_{\bfX_{\iota}}\boxminus(-\partial_{\tau} \bfu)\delta_{\bfH^{\cdot(d)}},\quad \partial \bfDelta^*\bfR_{\bfX}(\pmb{0})=(-)^*(d+1)\sum_{a\in\Sh}\lambda_a\triangle_{I(a)}.\]
\end{corollary}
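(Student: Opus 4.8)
The plan is to combine the closed form for $\bfDelta^*\bfR_{\bfX}$ from Corollary \ref{C: trop Chow forms} with the two assertions of Proposition \ref{convolution derivative}. Write $\bff:=\bfDelta^*\bfR_{\bfX}=\delta_{\bfX_{\iota}}\boxminus\bfu\,\delta_{\bfH^{\cdot d}}$. For the first identity I would apply the derivative rule of Proposition \ref{convolution derivative} with $A=\delta_{\bfX_{\iota}}$ (of bidegree $(n-d,n-d)$) and the complementary $B=\bfu\,\delta_{\bfH^{\cdot d}}$ (of bidegree $(d,d)$, so that $A\wedge B$ is top-degree), obtaining $\partial_{\tau}\bff=\delta_{\bfX_{\iota}}\boxminus(-\partial_{\tau}B)$. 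The product rule $\partial_{\tau}(\bfu\,\delta_{\bfH^{\cdot d}})=(d+1)(\partial_{\tau}\bfu)\,\delta_{\bfH^{\cdot d}}$ proved just above then evaluates $\partial_{\tau}B$; substituting yields the first displayed formula, with the multiplicity $(d+1)$ carried inside $\partial_\tau$ and surfacing explicitly in the second.

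For the sub-differential at the origin I would evaluate $\partial_{\tau}\bff=\delta_{\bfX_{\iota}}\boxminus(-\partial_{\tau}B)$ at $\pmb 0$ by the second assertion $\bff(\pmb 0)=\int_{M_{\bbR}}A\wedge B$ of Proposition \ref{convolution derivative}, giving
\[\partial_{\tau}\bff(\pmb 0)=-(d+1)\int_{M_{\bbR}}(\partial_{\tau}\bfu)\,\delta_{\bfX_{\iota}}\wedge\delta_{\bfH^{\cdot d}}.\]
Since $\delta_{\bfX_{\iota}}\wedge\delta_{\bfH^{\cdot d}}=\delta_{\bfX_{\iota}}\wedge(\dd\bfu)^{\wedge d}$ is the $0$-cycle $\bfX_{\iota}\cdot\bfH^{\cdot d}$, which by the discussion of §3.2 represents the Monge-Ampère measure $\sum_{a\in\Sh}\mu_a\delta_{\trop(\gamma_a)}$, the integral collapses to a finite sum sampling $\partial_{\tau}\bfu$ at the tropicalized Shilov points,
\[\partial_{\tau}\bff(\pmb 0)=-(d+1)\sum_{a\in\Sh}\mu_a\,(\partial_{\tau}\bfu)(\trop\gamma_a).\]
By Lemma \ref{L: standard pwl function} the sup-differential $\partial\bfu(\trop\gamma_a)=\triangle_{I(\trop\gamma_a)}$, and by Lemma \ref{L: corner locus indicator set} this index set equals $I(a)$, so $(\partial_{\tau}\bfu)(\trop\gamma_a)$ is exactly the vertex of $\triangle_{I(a)}$ extreme in the direction $\tau$.

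It then remains to upgrade this family of directional derivatives to the polytope identity $\partial\bff(\pmb 0)=(-)^*(d+1)\sum_{a\in\Sh}\mu_a\triangle_{I(a)}$, which I would do by Legendre--Fenchel duality: the assignment $\tau\mapsto\partial_{\tau}\bff(\pmb 0)$ runs through the vertices of the sub-differential polytope, and the display above exhibits each as $-(d+1)$ times the sum over $a$ of the $\tau$-extreme vertex of $\mu_a\triangle_{I(a)}$; since the $\tau$-extreme vertex of a Minkowski sum is the sum of the $\tau$-extreme vertices of the summands, these are precisely the vertices of $(-)^*(d+1)\sum_a\mu_a\triangle_{I(a)}$, and a polytope is determined by its support function. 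I expect the main obstacle to lie in the sign bookkeeping of this last step: one must pin down the convexity convention for $\bfDelta^*\bfR_{\bfX}$ and track the inversion $(-)^*$ that the Minkowski-difference convolution $\boxminus$ imposes on the $\bfu$-sub-differential, so that the competing $\min/\max$ support functions genuinely coincide. The finiteness of $\Sh$ and the two indicator Lemmas reduce every remaining step to direct substitution.
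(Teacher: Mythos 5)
Your proposal is correct and follows essentially the same route as the paper's proof: the convolution expression from Corollary \ref{C: trop Chow forms}, the derivative and evaluation rules of Proposition \ref{convolution derivative} together with the product-rule lemma supplying the factor $(d+1)$, and the identification of $\delta_{\bfX_{\iota}}\wedge(\dd\bfu)^{\wedge d}$ with the Monge-Amp\`ere $0$-cycle supported at the tropicalized Shilov points. The only difference is that you spell out the final passage from the directional-derivative identity to the polytope identity via support functions and the indicator Lemmas \ref{L: standard pwl function} and \ref{L: corner locus indicator set}, a step the paper compresses into ``the equality of polytopes follows from this equality of functions.''
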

\begin{proof}
Take $\bff$ to be $\bfDelta^*\bfR_{\bfX}$ and use its convolution expression in Corollary \ref{C: trop Chow forms}, for any $\tau\in M_{\bbR}$, the partial derivative $\partial_{\tau}\bff$ is calculated as a convolution
\[\partial_{\tau} \bfDelta^*\bfR_{\bfX}=\partial_{\tau} \{ \delta_{\bfX_{\iota}}\boxminus \bfu \delta_{\bfH^{\cdot (d)}}\}=\delta_{\bfX_{\iota}}\boxminus-\partial_{\tau}\{ \bfu \delta_{\bfH^{\cdot (d)}}\}=\delta_{\bfX_{\iota}}\boxminus(-\partial_{\tau} \bfu)\delta_{\bfH^{\cdot(d)}},\]
the evaluation at $\pmb{0}$ thus can be expressed as
\[\partial_{\tau} \bfDelta^*\bfR_{\bfX}(\pmb{0})=-\int_{M_{\bbR}}\delta_{\bfX}\wedge\partial_{\tau}\bfu(\dd\bfu)^{\wedge d}=-\int_{M_{\bbR}}\partial_{\tau}\bfu~ \delta_{[\bfX\cdot (\bfH)^{\cdot (d)}]}=-(d+1)\sum_{a\in\Sh}\lambda_a\partial_{\tau}\bfu(\gamma_a).\]
The equality of polytopes follows from this equality of functions.
\end{proof}

\subsection{Residual Chow stability}~

Denote by $\bbS$ a maximal torus of $\bbG\bbL(n+1)$. Choose a basis $\underline{e}$ of $E$ that diagonalizes the action of $\bbS$ on $E$, hence also diagonalizes its induced action on $\fS^l E^{\vee}$, the eigenspaces are monomials $\{\underline{\check{e}}^{I}\}$ indexed by weights $I\subseteq \bbN^{n+1}$ with $\abs{I}=l$. Identify $\bbG_m^{n+1}$ in $E$ defined by $\underline{e}$ with $\bbS$. For any homogeneous polynomial $f\in \fS^l E^{\vee}$ written as $\sum a_I \underline{z}^I$, write $P_{\wt}(f,\bbS)$ its weight polytope, the convex hull of its non-zero weights, and write $\vartheta(F,\bbS)$ the weight (or roof) function defined over this polytope with graph the upper hull of augmented weights $\{(I, \log~\abs{a_I})\}$, and write $\hat{P}_{\wt}(f,\bbS)$ the augmented weight polytope as the hypograph of the weight function $\vartheta$. 

Recall that a strict Cartesian norm $\norm{\ndot}$ on $E$ is compatible with the action by $\bbS$ if there exists an orthonormal basis that diagonalizes the action. With such a norm and its induced norm on $\fS^l E^{\vee}$, write $\vartheta(\widetilde{f},\widetilde{\bbS})$ the weight function of the residual action, the indicator function of the weight polytope $P_{\wt}(\widetilde{f},\widetilde{\bbS})$ for residual action over $\widetilde{K}$.

Write $\bff_{\underline{e}}$ the tropicalization of $f$ (restricted to $\bbS$), it is a piecewise linear function on $\check{\bbR}^{n+1}$.
\begin{lemma}\label{L: weight polytope as tropical function}
For $f\in \fS^l E^{\vee}$ with its tropicalization $\bff_{\underline{e}}$, it holds that $\vartheta(f,\bbS)=\bff_{\underline{e}}^{\vee}$; with a compatible norm $\norm{\ndot}$, it holds that $P_{\wt}(\widetilde{f},\widetilde{\bbS})=\partial\bff_{\underline{e}}(\pmb{0})$.
\end{lemma}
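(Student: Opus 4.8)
The plan is to make the tropicalization explicit and then read off both equalities from Fenchel biduality and the subdifferential calculus already recorded for $\bfu$ in Lemma \ref{L: standard pwl function}. Writing $f=\sum_{\abs{I}=l}a_I\underline{\check{e}}^I$, I would first check that the restriction of $f$ to the torus $\bbS$ tropicalizes, under the $-\log\abs{\ndot}$ convention fixed for the tropicalization map, to the concave piecewise-linear function $\bff_{\underline{e}}(w)=\min_{a_I\neq 0}\{\braket{I,w}-\log\abs{a_I}\}$ on $\check{\bbR}^{n+1}$. This comes directly from $-\log\abs{f(\underline{z})}=\min_I\{\braket{I,w}-\log\abs{a_I}\}$ with $w=\trop(\underline{z})$, the ultrametric inequality becoming an equality off the tropical hypersurface; it is the exact analogue of $\bfu$, recovered when $f=\sum_i x_i$.

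For the first equality I would identify $\bff_{\underline{e}}$ as the Legendre--Fenchel conjugate of the roof function $\vartheta(f,\bbS)$. By definition $\vartheta(f,\bbS)$ is the concave upper hull of the augmented weights $\{(I,\log\abs{a_I})\}$, i.e. $\vartheta(f,\bbS)(p)=\sup\{\sum_I t_I\log\abs{a_I}:\sum_I t_I I=p,\ \sum_I t_I=1,\ t_I\geq 0\}$. Computing its conjugate, the infimum of $\braket{p,w}-\vartheta(f,\bbS)(p)$ over $p$ collapses, after substituting the defining convex combination, to a minimum of the linear functional $t\mapsto\sum_I t_I(\braket{I,w}-\log\abs{a_I})$ over the simplex, hence is attained at a single weight and equals $\min_I\{\braket{I,w}-\log\abs{a_I}\}=\bff_{\underline{e}}(w)$. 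Thus $\vartheta(f,\bbS)^{\vee}=\bff_{\underline{e}}$, and since both functions are closed and concave, Fenchel--Moreau biduality gives $\bff_{\underline{e}}^{\vee}=\vartheta(f,\bbS)$.

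For the second equality, with a compatible norm the induced norm on $\fS^l E^{\vee}$ reads off the orthonormal coordinates as $\norm{f}=\max_I\abs{a_I}$, so the reduction $\widetilde{f}$ is the sum of those monomials $\underline{\check{e}}^I$ whose coefficient is of maximal absolute value, and $P_{\wt}(\widetilde{f},\widetilde{\bbS})$ is the convex hull of $\{I:\abs{a_I}=\norm{f}\}$. On the other side, the sup-differential at $\pmb{0}$ of the min-of-affines $\bff_{\underline{e}}$ is the convex hull of the gradients $I$ of the affine pieces active at $\pmb{0}$; and the piece $\braket{I,\ndot}-\log\abs{a_I}$ is active at $\pmb{0}$ precisely when $-\log\abs{a_I}$ attains $\bff_{\underline{e}}(\pmb{0})=\min_I\{-\log\abs{a_I}\}$, i.e. when $\abs{a_I}=\max_J\abs{a_J}=\norm{f}$. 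Matching the two descriptions yields $\partial\bff_{\underline{e}}(\pmb{0})=P_{\wt}(\widetilde{f},\widetilde{\bbS})$; equivalently, this is the statement that $\pmb{0}\in\partial\vartheta(f,\bbS)(b)$, i.e. $b$ maximizes the roof function, whose argmax is exactly that convex hull.

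I expect the only real subtleties to be the bookkeeping of sign and ordering conventions---aligning the $\min$/$-\log$ tropicalization with the \emph{upper}-hull roof function and with the concave (rather than convex) Legendre--Fenchel transform in use here---and, more substantively, justifying that the set of affine pieces active at $\pmb{0}$ is literally the set of monomials surviving in the reduction $\widetilde{f}$. It is here that compatibility of $\norm{\ndot}$ with $\bbS$ is essential: diagonality of the norm in the basis $\underline{e}$ is what makes reduction commute with truncation to maximal-norm coefficients, and makes the residual torus $\widetilde{\bbS}$ act on $\widetilde{f}$ through the same weight lattice. That compatibility step is the one I would write out in full.
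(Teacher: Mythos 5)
Your proof is correct and follows essentially the same route as the paper's: the paper cites the Legendre--Fenchel duality between the roof function and the tropicalization as a well-known fact, and for the second assertion makes exactly your observation that the nonzero residual weights are the monomials of maximal coefficient norm, identified (via the upper horizontal face of the augmented weight polytope, i.e.\ the active affine pieces) with the sup-differential $\partial\bff_{\underline{e}}(\pmb{0})$. Your write-up merely supplies the explicit conjugate computation, the biduality step, and the orthonormality argument for $\norm{f}=\max_I\abs{a_I}$ that the paper leaves implicit.
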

\begin{proof}
The first assertion is a well-known fact in tropical geometry for hypersurfaces. The second assertion follows from the observation that non-zero weights in $\widetilde{f}$ are those monomials $\underline{z}^I$ with maximal $\abs{a_I}$, hence the residual weight polytope is isomorphic to the upper horizontal face of $\hat{P}_{\wt}(f,\bbS)$, or equivalently that of the graph of $\bff_{\underline{z}}^{\vee}$, which is by definition expressed as the sup-differential of $\bff_{\underline{z}}$ at $\pmb{0}$.
\end{proof}

Consider a maximal (rank $n$) torus $\bbT$ of $\bbS\bbL(n+1)$, it is the restriction of a maximal (rank $n+1$) torus $\bbS$ of $\bbG\bbL(n+1)$. The inclusion induces an action of $\check{\bbT}^{d+1}$ on $R_X$ and an action by $\check{\bbT}$ on $\Delta^*R_X$ induced via the embedding $\Delta$. 

\begin{lemma}\label{L: weight diagonal transposition}
It hols that
\[\vartheta(R_X,\Delta\check{\bbT})=\vartheta(\Delta^*R_X, \check{\bbT}),\quad P_{\wt}(\widetilde{R_X},\widetilde{\Delta\check{\bbT}})=P_{\wt}(\widetilde{\Delta^*R_X},\widetilde{\check{\bbT}})\]
\end{lemma}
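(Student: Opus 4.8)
The plan is to deduce both displayed equalities from a single identification of tropicalizations furnished by Lemma~\ref{L: weight polytope as tropical function}. That lemma expresses the weight (roof) function of a form as the Legendre--Fenchel dual of its tropicalization restricted to the acting torus, and the residual weight polytope as the sup-differential of that tropicalization at the origin $\pmb{0}$. Thus it suffices to show that the tropicalization of $R_X$ under the diagonal torus $\Delta\check{\bbT}$ coincides, as a piecewise linear function on $M_{\bbR}$, with the tropicalization of $\Delta^*R_X$ under $\check{\bbT}$; the first equality then follows by dualizing, and the second by taking sup-differentials at $\pmb{0}$ (which are moreover insensitive to additive constants).

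First I would tropicalize the diagonal inclusion $\Delta\colon\check{\bbT}\to\check{\bbT}^{d+1}$ into the diagonal $\bfDelta\colon M_{\bbR}\to (M_{\bbR})^{d+1}$, under which restricting a regular function on $\check{\bbT}^{d+1}$ to the diagonal subtorus corresponds to pulling its tropicalization back along $\bfDelta$. Applying this to the Chow form identifies the tropicalization of $R_X$ restricted to $\Delta\check{\bbT}$ with the diagonal pullback $\bfDelta^*\bfR_{\bfX}$ of the tropical Chow form. By Corollary~\ref{C: trop Chow forms} this pullback equals $\delta_{\bfX_{\iota}}\boxminus\bfu\,\delta_{\bfH^{\cdot d}}$, which by Proposition~\ref{P: trop chow hypersurface} is exactly the tropical Chow form cut out by $\bfDelta^*\bfCh(\bfX)=\trop(\Delta^*\Ch(X))$, i.e.\ the tropicalization of the genuine polynomial $\Delta^*R_X$. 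Feeding $\bfDelta^*\bfR_{\bfX}=\trop(\Delta^*R_X)$ back into Lemma~\ref{L: weight polytope as tropical function} yields $\vartheta(R_X,\Delta\check{\bbT})=\vartheta(\Delta^*R_X,\check{\bbT})$ and, at the level of sup-differentials at $\pmb{0}$, the residual identity $P_{\wt}(\widetilde{R_X},\widetilde{\Delta\check{\bbT}})=P_{\wt}(\widetilde{\Delta^*R_X},\widetilde{\check{\bbT}})$.

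The step I expect to be the main obstacle is the identification $\bfDelta^*\bfR_{\bfX}=\trop(\Delta^*R_X)$ itself, because a priori the two sides are built from different coefficient data. The diagonal restriction $\bfDelta^*\bfR_{\bfX}$ records, for each total weight $J=\sum_i I_i$, the minimal valuation $\min_{\sum I_i=J}v(a_{\mathbf{I}})$ among the monomials of $R_X$ mapping to $J$, whereas $\trop(\Delta^*R_X)$ records the valuation $v(b_J)$ of the summed coefficient $b_J=\sum_{\sum I_i=J}a_{\mathbf{I}}$; the ultrametric inequality only gives $v(b_J)\geq\min_{\sum I_i=J}v(a_{\mathbf{I}})$, so one must rule out a drop in valuation from cancellation along the faces that determine the corner locus and the top face over $\pmb{0}$. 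This is precisely what the tropical-intersection computation of Proposition~\ref{P: trop chow hypersurface} guarantees: the honest tropical Chow hypersurface $\bfDelta^*\bfCh(\bfX)$ produced by stable intersection agrees with $\trop(\Delta^*\Ch(X))$, so the two concave piecewise linear functions share the same corner locus and Newton polytope and hence coincide, with no spurious cancellation on the relevant faces. Once this identification is in hand, the convexity bookkeeping (dualizing, and reading off the sup-differential at $\pmb{0}$ as in Corollary~\ref{C: trop Chow form LF dual}) is routine.
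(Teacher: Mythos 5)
You have correctly isolated the crux---whether $\bfDelta^*\bfR_{\bfX}$, produced by stable tropical intersection, agrees with the tropicalization of the algebraic restriction $\Delta^*R_X$---but your resolution of it fails, and this is a genuine gap. Proposition~\ref{P: trop chow hypersurface} is an identity purely among tropical objects: it computes the \emph{tropical} pullback of $\bfCh(\bfX)=\trop(\Ch(X))$, i.e.\ the stable intersection with the tropical diagonal, as a stable Minkowski difference. It nowhere asserts that this tropical pullback equals $\trop(\Delta^*\Ch(X))$; stable intersection is the tropical surrogate for intersecting with a \emph{generic translate} of the diagonal, and tropicalization commutes with such pullbacks only under genericity hypotheses that fail here. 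Indeed they fail as badly as possible: for $d\geq 1$ the diagonal of $\bbP(\check{E})^{d+1}$ is \emph{contained} in the Chow hypersurface (a single hyperplane always meets a positive-dimensional projective variety), so the naive restriction $\Delta^*R_X(\underline{\xi})=R_X(\underline{\xi},\dots,\underline{\xi})$ vanishes identically: writing $R_X=\sum a_{\mathbf{I}}\prod_i(\underline{\xi}^i)^{I_i}$, every collapsed coefficient $b_J=\sum_{\sum_i I_i=J}a_{\mathbf{I}}$ is zero. The cancellation you hoped to exclude is therefore total, and no identity of tropical cycles can exclude it. To run your argument one must first \emph{define} $\Delta^*R_X$ non-naively (as an initial/leading form along the diagonal, via generic translates $R_X(g_0\underline{\xi},\dots,g_d\underline{\xi})$, or through the integral model over $K^{\circ}$), and then prove that this definition is compatible with $\bfDelta^*\bfR_{\bfX}$; neither step is supplied by the results you cite.

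This is also far from the paper's own proof, which is a one-line unfolding of definitions with no tropical input: the $\Delta\check{\bbT}$-eigenspaces of $(\fS^{\delta}\check{E})^{\otimes d+1}$ are graded by the total weight $J=\sum_i I_i$, so the weight function and residual weight polytope of $R_X$ under $\Delta\check{\bbT}$ are read off from the data $\{(J,\log\abs{a_{\mathbf{I}}})\}$, and the corresponding data for $\Delta^*R_X$ under $\check{\bbT}$ is taken to be literally the same, the only ingredient being restriction of characters along $\Delta\check{\bbT}\hookrightarrow\check{\bbT}^{d+1}$. Your detour through Legendre--Fenchel duality and Corollary~\ref{C: trop Chow forms} re-imports precisely the cancellation issue that this definitional reading avoids. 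A smaller point: Lemma~\ref{L: weight polytope as tropical function} is stated for $f\in\fS^{l}E^{\vee}$, where each eigenspace is spanned by a single monomial; applying it on the left-hand side, where the $\Delta\check{\bbT}$-eigenspaces are spanned by many monomials, needs the (easy, but unstated) extension in which coefficients are replaced by sup-norms of eigencomponents.
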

\begin{proof}
Unfold the definition of weight functions of an action.
\end{proof}



Denote by $\tau(R_X)$ the tropicalization of (the restriction to $\check{\bbS}$ of) $\Delta^*R_X$, it is a piecewise linear function on $\check{\bbR}^{n+1}$; its restriction to $M_{\bbR}$ is also a piecewise linear function. 

\begin{proposition}\label{P: weight transpose}
As functions on $M_{\bbR}$, it holds that up to an additive constant
\[\tau(R_X)|_{M_{\bbR}}=\bfDelta^*\bfR_{\bfX}. \]
As functions and polytopes on $N_{\bbR}$, it holds that (up to an additive constant for functions)
\[\vartheta(\Delta^*R_X,\check{\bbT})= (\bfDelta^*\bfR_{\bfX})^{\vee},\quad P_{\wt}(\widetilde{\Delta^*R_X},\widetilde{\check{\bbT}})=\partial\bfDelta^*\bfR_{\bfX}(\pmb{0}).\]
In particular, the refined Chow polytope is given by $\dom ((\bfDelta^*\bfR_{\bfX})^{\vee})$ with the induced lower face projection subdivision.
\end{proposition}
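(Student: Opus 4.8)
The plan is to prove Proposition~\ref{P: weight transpose} in three stages, matching its three assertions, by reducing everything to objects already computed in the excerpt and then applying Lemma~\ref{L: weight polytope as tropical function} and Lemma~\ref{L: weight diagonal transposition}.

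First I would establish the identification $\tau(R_X)|_{M_{\bbR}}=\bfDelta^*\bfR_{\bfX}$ up to an additive constant. The function $\tau(R_X)$ is by definition the tropicalization of the restriction of $\Delta^*R_X$ to $\check{\bbS}$, so it is the tropical polynomial attached to the homogeneous polynomial $\Delta^*R_X$ of degree $\delta(d+1)$. On the other hand, $\bfDelta^*\bfR_{\bfX}$ is the piecewise-linear function cutting out the tropical Chow divisor $\bfDelta^*\bfCh(\bfX)$, which by Proposition~\ref{P: trop chow hypersurface} equals $\bfX_{\iota}\boxminus\bfH^{\cdot(d+1)}$. The key point is compatibility of tropicalization with the pull-back $\Delta^*$ and with the formation of the Chow form: since tropicalization of a hypersurface's defining polynomial recovers the tropical divisor as a piecewise-linear function (unique up to an additive constant, exactly the ambiguity flagged in the statement), and since $\bfDelta^*\bfR_{\bfX}$ is by Corollary~\ref{C: trop Chow forms} a Green function for the same divisor $\bfDelta^*\bfCh(\bfX)$, the two piecewise-linear functions cut out the same tropical divisor and hence agree up to a constant. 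I would phrase this as: both are concave piecewise-linear functions whose Legendre--Fenchel duals have the same domain, equivalently whose $\ddc$ gives the same tropical cycle $\bfDelta^*\bfCh(\bfX)$, so they coincide modulo $\bbR$.

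Second, the equality of Legendre--Fenchel duals $\vartheta(\Delta^*R_X,\check{\bbT})=(\bfDelta^*\bfR_{\bfX})^{\vee}$ follows by combining the first assertion with Lemma~\ref{L: weight polytope as tropical function}: that lemma gives $\vartheta(\Delta^*R_X,\check{\bbS})=(\bff_{\underline{e}})^{\vee}$ where $\bff_{\underline{e}}=\tau(R_X)$ is the tropicalization, and restricting the torus from $\check{\bbS}$ to $\check{\bbT}$ corresponds exactly to restricting the ambient space $\check{\bbR}^{n+1}$ to $M_{\bbR}$ on which the weight function lives, so $\vartheta(\Delta^*R_X,\check{\bbT})=(\tau(R_X)|_{M_{\bbR}})^{\vee}=(\bfDelta^*\bfR_{\bfX})^{\vee}$ by the first part. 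Taking Legendre--Fenchel duals is insensitive to the additive constant up to a global shift of the dual, which is harmless for the domain and for the polytope. Then the second displayed polytope equality $P_{\wt}(\widetilde{\Delta^*R_X},\widetilde{\check{\bbT}})=\partial\bfDelta^*\bfR_{\bfX}(\pmb{0})$ is the residual half of Lemma~\ref{L: weight polytope as tropical function}, again transported through the first assertion: the residual weight polytope equals the sup-differential of the tropicalization at the origin, i.e. $\partial(\tau(R_X)|_{M_{\bbR}})(\pmb{0})=\partial\bfDelta^*\bfR_{\bfX}(\pmb{0})$.

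Finally, the last sentence about the refined Chow polytope being $\dom((\bfDelta^*\bfR_{\bfX})^{\vee})$ with its lower-face subdivision is essentially a restatement: the refined Chow polytope is the weight polytope $P_{\wt}(\Delta^*R_X,\check{\bbT})$ together with the regular subdivision induced by the weight (roof) function $\vartheta$, and by what precedes this polytope is precisely the domain of $(\bfDelta^*\bfR_{\bfX})^{\vee}$ while the subdivision is read off from the graph of $\vartheta=(\bfDelta^*\bfR_{\bfX})^{\vee}$ by projecting its lower faces. I expect the main obstacle to be the careful bookkeeping in the first assertion: verifying that tropicalization genuinely intertwines the algebraic $\Delta^*$ with the tropical $\bfDelta^*$ and that the Green-function description of $\bfDelta^*\bfR_{\bfX}$ from Corollary~\ref{C: trop Chow forms} matches the naive tropicalization of $\Delta^*R_X$ up to exactly one additive constant. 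This requires invoking the compatibility of tropicalization with base change and with the incidence-correspondence transform, and checking that no multiplicity discrepancy arises from the diagonal pullback; the remaining two assertions are then formal consequences of the cited lemmas.
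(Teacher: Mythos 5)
Your proposal is correct and takes essentially the same route as the paper: the first identity is exactly the paper's appeal to commutativity of tropicalization with pull-back of Cartier divisors (which you unpack via Corollary~\ref{C: trop Chow forms} and the divisor comparison), and the remaining equalities follow, as in the paper, by transporting Lemma~\ref{L: weight polytope as tropical function} through that identification, with the last sentence being a restatement. The only loose end --- that two piecewise-linear functions with the same tropical divisor agree a priori only up to an affine-linear (not constant) function --- is a point the paper's own conventions and proof gloss over in the same way, so it is not a gap specific to your argument.
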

\begin{proof}
The first assertion comes from the commutativity of pull-back of Cartier divisor with tropicalization. The second assertion follows from Proposition \ref{L: weight polytope as tropical function}.
\end{proof}

\begin{corollary}\label{C: MA polytope}
The residual Chow polytope is equal to the (opposite of dilated) Monge-Ampère polytope
\[P_{\wt}(\widetilde{\Delta^*R_X},\widetilde{\check{\bbT}})=\partial\bfDelta^*\bfR_{\bfX}(\pmb{0})=(-)^*(d+1)P_{\MA}(X,\norm{\cdot})\]
\end{corollary}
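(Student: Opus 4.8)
The plan is to obtain the stated chain of equalities by concatenating the two immediately preceding results together with the definition of the Monge--Amp\`ere polytope; essentially no new computation is required beyond matching notation. First, the left-hand equality $P_{\wt}(\widetilde{\Delta^*R_X},\widetilde{\check{\bbT}})=\partial\bfDelta^*\bfR_{\bfX}(\pmb{0})$ is exactly the second assertion of Proposition \ref{P: weight transpose}: the residual weight polytope of $\Delta^*R_X$ under $\check{\bbT}$ is the sup-differential at the origin of the Legendre--Fenchel dual of $\tau(R_X)|_{M_{\bbR}}=\bfDelta^*\bfR_{\bfX}$, via Lemma \ref{L: weight polytope as tropical function} and the commutation of Cartier pull-back with tropicalization.

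Next, I would invoke Corollary \ref{C: trop Chow form LF dual} for the right-hand equality $\partial\bfDelta^*\bfR_{\bfX}(\pmb{0})=(-)^*(d+1)\sum_{a\in\Sh}\mu_a\triangle_{I(a)}$, where $\mu_a$ is the multiplicity of the Shilov point $\gamma_a$ in the Monge--Amp\`ere measure $c_1(\phi_{\norm{\ndot}})^{\wedge d}=\sum_{a\in\Sh}\mu_a\delta_{\gamma_a}$. This presentation as a reflected, $(d+1)$-dilated Minkowski sum of simplices comes from the convolution formula $\partial_{\tau}\bfDelta^*\bfR_{\bfX}=\delta_{\bfX_{\iota}}\boxminus(-\partial_{\tau}\bfu)\delta_{\bfH^{\cdot d}}$ of Corollary \ref{C: trop Chow form LF dual}, the evaluation identity of Proposition \ref{convolution derivative}, and the identification $\partial\bfu(a)=\triangle_{I(a)}$ from Lemma \ref{L: standard pwl function}.

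Finally I would match this Minkowski sum with the definition of $P_{\MA}(X,\norm{\ndot})$. The indexing simplices agree: by Lemma \ref{L: corner locus indicator set} the corner-locus set $I(\trop_{\underline{e}}(\gamma_a))$ coincides with $I_{\underline{e}}(\gamma_a)=I(a)$, so $\sum_{a\in\Sh}\mu_a\triangle_{I(a)}$ is by definition the Monge--Amp\`ere polytope, and substituting yields $\partial\bfDelta^*\bfR_{\bfX}(\pmb{0})=(-)^*(d+1)P_{\MA}(X,\norm{\ndot})$. The only genuinely delicate point specific to this corollary is the bookkeeping of the reflection $(-)^*$ and the dilation factor $(d+1)$, together with the verification that the two families of simplices are literally the same --- which is precisely the role played by Lemma \ref{L: corner locus indicator set}.
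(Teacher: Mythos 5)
Your proof is correct and takes essentially the same route as the paper: the first equality is exactly Proposition \ref{P: weight transpose} and the second is Corollary \ref{C: trop Chow form LF dual}, which is precisely what the paper's one-line proof cites. Your extra step of matching the tropical index sets with the metric ones via Lemma \ref{L: corner locus indicator set} merely makes explicit an identification the paper leaves implicit in its shared notation $I(a)$.
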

\begin{proof}
The second equality follows from Corollary \ref{C: trop Chow form LF dual}.
\end{proof}



\begin{theorem}\label{T': critical metric}
Assume that $(X,O(1))$ is Chow stable. The Fubini-Study metric $\phi_{\norm{\ndot}}$ is critical if and only if $\pmb{0}\in P_{MA}(X,\norm{\cdot})$.
\end{theorem}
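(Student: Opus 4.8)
The plan is to prove this the GIT way, obtaining the same criterion as the analytic Theorem \ref{T: critical metric} but along the route advertised in the introduction: feed the Chow form into the non-Archimedean Kempf-Ness criteria and then read off residual semistability from the weight-polytope identifications assembled above.

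First I would reduce criticality to norm minimality of the Chow vector. By the isometry of Proposition \ref{P: Deligne-Chow isometry} the metric on the intersection line bundle is computed by $\norm{R_X}_{\fS_{\delta,d+1}^{\vee}}$, so by the definition of criticality together with the accompanying remark, $\phi_{\norm{\ndot}}$ is critical exactly when $\mu_X(g)=\norm{g.R_X}_{\fS_{\delta,d+1}^{\vee}}$ attains its minimum at the identity of $\bbS\bbL(n+1)^{\an}$; equivalently, when $R_X$ is norm minimal with respect to $\bbU_{\norm{\ndot}}$ for the $\bbS\bbL(n+1)$-action on $(\fS^{\delta}\check{E})^{\otimes d+1}$ carrying the Chow norm.

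Next I would invoke Criteria \ref{C: KN criteria}. Chow stability guarantees that $R_X$ is semistable, so the criteria is not vacuous, and the invariant algebra of $\bbS\bbL(n+1)$ on tensor powers of $(E,\norm{\ndot})$ is finitely generated over $K^{\circ}$, so the criteria applies despite $K=\overline{K}$. This converts norm minimality into residual semistability of the reduction $\widetilde{R_X}$ under the residual action of $\widetilde{\bbS\bbL(n+1)}$. I would then translate residual semistability over the algebraically closed residue field $\widetilde{K}$ into the containment of $\pmb{0}$ in the weight polytope $P_{\wt}(\widetilde{R_X},\widetilde{\Delta\check{\bbT}})$ of the reduction of the diagonal maximal torus compatible with $\norm{\ndot}$. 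By Lemma \ref{L: weight diagonal transposition} this polytope equals $P_{\wt}(\widetilde{\Delta^*R_X},\widetilde{\check{\bbT}})$, which Corollary \ref{C: MA polytope} identifies with $(-)^*(d+1)P_{MA}(X,\norm{\cdot})$. Since $\pmb{0}$ is fixed by the sign involution $(-)^*$ and by positive dilation, $\pmb{0}\in P_{\wt}(\widetilde{\Delta^*R_X},\widetilde{\check{\bbT}})$ is equivalent to $\pmb{0}\in P_{MA}(X,\norm{\cdot})$, closing the chain of equivalences.

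I expect the delicate step to be the weight-polytope characterization of residual semistability, not the formal polytope bookkeeping. Criticality is the \emph{global} minimality of the convex function $\mu_X$, while $\pmb{0}\in P_{MA}(X,\norm{\cdot})$ records only minimality along the single apartment (maximal torus) cut out by the chosen orthonormal basis; a priori the Hilbert--Mumford test ranges over every one-parameter subgroup. The inputs I would marshal to bridge this gap are the geodesic convexity of $\mu_X$ already used in the remark following the definition of critical metric, the fact that the norm-preserving group $\bbU_{\norm{\ndot}}=\KG(K^{\circ})$ reduces onto $\widetilde{\bbS\bbL(n+1)}(\widetilde{K})$ (by smoothness and the Henselianity of the complete ring $K^{\circ}$), so that reductions of tori compatible with $\norm{\ndot}$ exhaust all maximal tori of the residual group up to $\widetilde{K}$-conjugacy, and Chow stability, which makes $\mu_X$ proper so that a global minimum is attained. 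Showing that minimality along the single adapted apartment genuinely detects the global minimum—equivalently that the containment $\pmb{0}\in P_{MA}$ is independent of the chosen orthonormal basis—is the crux where I anticipate the real work will lie.
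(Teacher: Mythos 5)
Your proposal retraces the paper's own proof step for step: Proposition \ref{P: Deligne-Chow isometry} to turn criticality into norm minimality of $R_X$, Criteria \ref{C: KN criteria} to turn minimality into residual semistability, then Lemma \ref{L: weight diagonal transposition} and Corollary \ref{C: MA polytope} to convert the polytope containment into $\pmb{0}\in P_{\MA}(X,\norm{\ndot})$. The one step you single out as ``the crux'' --- passing between residual semistability, which by Hilbert--Mumford over $\widetilde{K}$ involves \emph{all} one-parameter subgroups of the residual group, and the containment $\pmb{0}\in P_{\wt}(\widetilde{R_X},\widetilde{\Delta\check{\bbT}})$ for the \emph{single} residual torus attached to the chosen orthonormal basis --- is exactly the step the paper dispatches with the words ``or in other words''. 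So yours is not a different proof; it is the paper's proof together with an honest admission of where it is thin.

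That admission is the right one, and the gap is genuine: the bridging claim you would need (minimality along the single adapted apartment, equivalently $\pmb{0}\in P_{\MA}$ for one basis, implies global minimality) fails as stated, so the ingredients you list (convexity of $\mu_X$, properness under Chow stability, surjectivity of $\bbU_{\norm{\ndot}}$ onto the residual group) cannot assemble into a proof of it. Concretely, let $X\subset\bbP^{2}$ be a smooth plane cubic whose equation $F$ has coefficients in $K^{\circ}$ reducing to a cuspidal cubic in general position (all ten monomials occurring with unit coefficients). Smoothness makes $F$, hence $R_X$, GIT stable, so $(X,O(1))$ is Chow stable. The special fibre of the model induced by $\norm{\ndot}$ is this irreducible cubic, so the Monge--Amp\`ere measure is $3\delta_{\gamma}$ at its unique Shilov point, and since no linear form vanishes identically on the reduction one gets $I(\gamma)=\Card{2}$ and $P_{\MA}(X,\norm{\ndot})=3\triangle_{\Card{2}}\ni\pmb{0}$; note this holds for \emph{every} orthonormal basis, because $P_{\MA}$ as defined records only the degrees of the components of the reduction and which coordinates vanish on them. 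Yet $\widetilde{R_X}$ is the Chow form of a cuspidal cubic, which is unstable (the assignment $F\mapsto R_X$ is linear, injective and equivariant, so it carries $0$ in the orbit closure to $0$ in the orbit closure), so by the equivalence of minimality with residual semistability in Criteria \ref{C: KN criteria} the vector $R_X$ is not norm minimal and $\phi_{\norm{\ndot}}$ is not critical. The instability is detected only by a residual torus conjugate to, but different from, the adapted one, and $P_{\MA}$ cannot see it. This shows two things: first, the passage ``one adapted torus suffices'' is false in general, so a correct argument must quantify over all compatible maximal tori (all orthonormal bases of $\norm{\ndot}$), which is what clause (1)$'$ of Criteria \ref{C: KN criteria} genuinely requires; second, the identification of the residual weight polytope with $P_{\MA}$ in Corollary \ref{C: MA polytope} (via Proposition \ref{P: weight transpose}) can only be valid for reductions in general position --- indeed, already the object $\Delta^{*}R_X$ needs care, since for $d\geq 1$ the diagonal lies inside the Chow hypersurface and the naive restriction vanishes identically. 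So the crux you flagged is not only unproven in your proposal; it is the precise point at which the paper's own proof, taken at face value, breaks down.
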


\begin{proof}
By definition, $\phi_{\norm{\ndot}}$ being critical is equivalent to the Chow form $R_X$ being norm minimal with respect to the maximal torus $\bbT$ of $\bbS\bbL(n+1)$ induced by $\norm{\ndot}$. By Criteria \ref{C: KN criteria} this holds if and only if $R_X$ is residually semi-stable, or in other words $\pmb{0}$ belongs to the polytope $P_{\wt}(\widetilde{R_X},\widetilde{\Delta\check{\bbT}})$, hence to $P_{\wt}(\widetilde{\Delta^*R_X},\widetilde{\check{\bbT}})$ by Lemma \ref{L: weight diagonal transposition}. Thus we get the same criteria $\pmb{0}\in P_{\MA}(X,\norm{\ndot})$ by the above Proposition.
\end{proof}

\begin{remark}
As a Minkowski sum of dilated simplices, the Monge-Ampère polytope $\sum_{a\in\Sh}\mu_a \triangle_{I(a)}$ (now with simplices viewed in $\bbR^{n+1}$) is a polymatroid (or generalized permutohedra) in $\bbR^{n+1}$\cite{PRW}. The corresponding rank function on subsets of $\Card{n}$ is given by 
\[I\subseteq \Card{n},\quad r(I)=\sum_{a\in\Sh, I(a)\cap I\neq \emptyset}\mu_a.\]
It remains to study combinatorial informations encoded by this polymatroid.
\end{remark}

\bibliographystyle{abbrv}

\begin{thebibliography}{AB}


\bibitem[Ber]{Ber}
V.~Berkovich
\newblock{\em Spectral theory and analytic geometry over non-Archimedean fields}
\newblock Mathematical Surveys and Monographs (33), AMS, 1990.


\bibitem[BE]{BE}
S.~Boucksom and D.~Eriksson.
\newblock Spaces of norms, determinant of cohomology and Fekete points in non-Archimedean geometry
\newblock {\em Adv. Math. 378 (2021), Paper No. 107501}.
 
\bibitem[BGM]{BGM}
S.~Boucksom, W.~Gubler and F.~Martin
\newblock Differentiability of relative volumes over an arbitrary non-Archimedean field
\newblock {\em Int. Math. Res. Not. IMRN 2022, no. 8, 6214–6242}.

\bibitem[BGR]{BGR}
S.~Bosch, U.~Günter and R.~Remmert
\newblock Non-Archimedean Analysis, A Systematic Approach to Rigid Analytic Geometry
\newblock {\em Grundlehren der mathematischen Wissenschaften} 1984

\bibitem[BoGS]{BoGS}
J.-B.~Bost, H.~Gillet and C.~Soulé
\newblock Heights of projective varieties and positive Green forms
\newblock {\em J. AMS. 4 (1994), no. 7/4}

\bibitem[BlGS]{BlGS}
S.~Bloch, H.~Gillet and C.~Soulé
\newblock Non-Archimedean Arakelov theory
\newblock {\em J. Algebraic Geom. 4 (1995), no. 3}

\bibitem[BGJKM]{BGJKM}
J.~Burgos Gil, W.~Gubler, Ph.~Jell, K.~Künnemann and F.~Martin
\newblock Differentiability of non-archimedean volumes and non-archimedean Monge-Ampère equations
\newblock {\em Algebraic Geom. 7 (2020), no. 2}

\bibitem[BPS]{BPS}
J.~Burgos Gil, P.~Philippon and M.~Sombra
\newblock Arithmetic geometry of toric varieties. Metrics, measures and heights
\newblock {\em Astérisque no. 360 (2014)}

\bibitem[Bos1]{Bos1}
J.-B.~Bost
\newblock Semi-stability and heights of cycles
\newblock {\em Invent. Math. 118 (1994) n.2 223-253}

\bibitem[Bos2]{Bos2}
J.-B.~Bost
\newblock Intrinsic heights of stable varieties and abelian varieties
\newblock{\em Duke Math. J. 82 (1996), no. 1, 21–70}


\bibitem[Bur]{Bur}
J.-F.~Burnol
\newblock Remarques sur la stabilité en arithmétique
\newblock {\em I.M.R.N. Issue 6 (1992)}

\bibitem[Che]{Che}
H.~Chen
\newblock Maximal slope of tensor product of Hermitian vector bundles
\newblock{\em J. Alg. Geom. 18 no.3 (2009)}

\bibitem[CL]{CL}
A.~Chambert-Loir
\newblock Mesures et équidistribution sur les espaces de Berkovich
\newblock{\em J. Reine Angew. Math. 595 (2006), 215--235}

\bibitem[CLD]{CLD}
A.~Chambert-Loir and A.~Ducros
\newblock Formes différentielles réelles et courants sur les espaces de Berkovich
\newblock \href{https://arxiv.org/pdf/1204.6277.pdf}
 {\tt https://arxiv.org/pdf/1204.6277.pdf}.

\bibitem[CM]{CM}
H.~Chen and A.~Moriwaki
\newblock Arithmetic intersection theory over adelic curves
\newblock \href{https://arxiv.org/pdf/2103.15646.pdf}
 {\tt https://arxiv.org/pdf/2103.15646.pdf}.


\bibitem[Don]{Don}
S.~Donaldson
\newblock Scalar curvature and projective embeddings. I
\newblock{\em J. Differential Geom. 59 (2001), no. 3, 479–522}

\bibitem[DFS]{DFS}
A.~Dickenstein and E.~Feichtner and B.~Sturmfels
\newblock Tropical discriminants
\newblock{\em J. Amer. Math. Soc. 20 (2007), no. 4, 1111–1133}

\bibitem[Fin]{Fin}
A.~Fink
\newblock Tropical cycles and Chow polytopes
\newblock{\em Beitr. Alg. Geom., 54, 13–40 (2013)}

\bibitem[Gas]{Gas}
C.~Gasbarri
\newblock Heights and Geometric Invariant Theory
\newblock{\em Forum Math. 12 (2000) }

\bibitem[Gua]{Gua}
R.~Gualdi
\newblock Heights of hypersurfaces in toric varieties
\newblock{\em Algebra Number Theory 12 (2018), no. 10}

\bibitem[Gub]{Gub}
W.~Gubler
\newblock Local heights for subvarieties over non-archimedean fields
\newblock{\em J. Reine Angew. Math. 498 (1998), 61--113}


\bibitem[GK]{GK}
W.~Gubler and K.~Künnemann
\newblock A tropical approach to non-archimedean Arakelov theory 
\newblock {\em Algebra Number Theory 11 (2017)}

\bibitem[Luo]{Luo}
H.~Luo 
\newblock Geometric criterion for Gieseker-Mumford stability of polarized manifolds
\newblock {\em J. Diff. Geom. 49 (1998), no. 3, 577–599}

\bibitem[Mac]{Mac}
M.~Maculan
\newblock Diophantine Applications of Geometric Invariant Theory
\newblock{\em Mém. Soc. Math. Fr. (N.S.) No. 152 (2017)}

\bibitem[Mih]{Mih}
A.~Mihatsch
\newblock On Tropical Intersection Theory
\newblock \href{https://arxiv.org/pdf/2107.12067.pdf}
 {\tt https://arxiv.org/pdf/2107.12067.pdf}.

\bibitem[Phi]{Phi}
P.~Philippon
\newblock Sur les hauteurs alternatives I
\newblock{\em Math. Ann. 289 (1991) 255-283}

\bibitem[PRW]{PRW}
A.~Postnikov, V.~Reiner and L.~Williams
\newblock Faces of generalized permutohedra
\newblock{\em Documenta Mathematica Vol.13 (2008), 207-273}

\bibitem[RTW]{RTW}
B.~Rémy, A.~Thuillier and A.~Werner
\newblock Bruhat-Tits theory from Berkovich's point of view. I
\newblock{\em  Ann. Sci. Éc. Norm. Supér. (4) 43 (2010), no. 3, 461–554}


\bibitem[Ses]{Ses}
C.S.~Seshadri
\newblock Geometric reductivity over an arbitrary base
\newblock{\em Adv. Math. no. 26 (1977), 225-274}


\bibitem[Tri]{Tri}
P.~Tripoli
\newblock Tropical Chow Hypersurfaces
\newblock{\em I.M.R.N. no. 14 (2019), 4302–4324}

\bibitem[Sou]{Sou}
Ch.~Soulé
\newblock Géométrie d’Arakelov et théorie des nombres transcendants
\newblock{\em Astérisques} 198-200, 1991

\bibitem[Zha1]{Zha1}
S.-W.~Zhang
\newblock Heights and reductions of semi-stable varieties
\newblock{\em Compos. Math. (1996)}

\bibitem[Zha2]{Zha2}
S.-W.~Zhang
\newblock Geometric reductivity at Archimedean places
\newblock{\em Internat. Math. Res. Notices 1994, no. 10}


\end{thebibliography}

\end{document}